\begin{document}
\newcounter{remark}
\newcounter{theor}
\setcounter{remark}{0}
\setcounter{theor}{1}
\newtheorem{claim}{Claim}
\newtheorem{theorem}{Theorem}[section]
\newtheorem{lemma}[theorem]{Lemma}
\newtheorem{corollary}[theorem]{Corollary}
\newtheorem{proposition}[theorem]{Proposition}
\newtheorem{question}{question}[section]
\newtheorem{defn}{Definition}[theor]
\numberwithin{equation}{section}

\title[The parabolic Monge-Amp\`{e}re equation]{The parabolic Monge-Amp\`{e}re equation on compact almost Hermitian manifolds}
\author{Jianchun Chu}
\address{School of Mathematical Sciences, Peking University, Yiheyuan Road 5, Beijing, P.R.China, 100871}
\email{chujianchun@pku.edu.cn}

\begin{abstract}
We prove the long time existence and uniqueness of solutions to the parabolic Monge-Amp\`{e}re equation on compact almost Hermitian manifolds. We also show that the normalization of solution converges to a smooth function in $C^{\infty}$ topology as $t\rightarrow\infty$. Up to scaling, the limit function is a solution of the Monge-Amp\`{e}re equation. This gives a parabolic proof of existence of solutions to the Monge-Amp\`{e}re equation on almost Hermitian manifolds.
\end{abstract}
\maketitle

\section{Introduction}
Let $(M,\omega,J)$ be an almost Hermitian manifold of real dimension $2n$. And we use $g$ to denote the corresponding Riemannian metric. For a smooth real-valued function $F$ on $M$, we consider the Monge-Amp\`{e}re equation
\begin{equation}\label{Complex Monge-Ampere Equation}
\left\{ \begin{array}{ll}
\ (\omega+\sqrt{-1}\partial\overline{\partial}\varphi)^{n}=e^{F}\omega^{n}\\[1mm]
\ \tilde{\omega}=\omega+\sqrt{-1}\partial\bar{\partial}\varphi>0\\[1mm]
\ \sup_{M}\varphi=0
\end{array}\right.
\end{equation}
and the parabolic Monge-Amp\`{e}re equation
\begin{equation}\label{Parabolic Monge-Ampere Equation}
\left\{ \begin{array}{ll}
\ \frac{\partial\varphi}{\partial t}=\log\frac{(\omega+\sqrt{-1}\partial\overline{\partial}\varphi)^{n}}{\omega^{n}}-F \\[1mm]
\ \varphi(\cdot,0)=\varphi_{0}\\[1mm]
\ \tilde{\omega}=\omega+\sqrt{-1}\partial\bar{\partial}\varphi>0
\end{array}\right.
\end{equation}
where $\sqrt{-1}\partial\overline{\partial}\varphi=\frac{1}{2}(dJd\varphi)^{(1,1)}$ and $\varphi_{0}$ is a smooth real-valued function such that $\omega+\sqrt{-1}\partial\overline{\partial}\varphi_{0}>0$.

The Monge-Amp\`{e}re equation (\ref{Complex Monge-Ampere Equation}) plays an important role in geometry. When $(M,\omega,J)$ is a compact K\"{a}hler manifold, Calabi \cite{Calabi} presented his famous conjecture and transformed this problem into (\ref{Complex Monge-Ampere Equation}). By using the maximum principle, Calabi \cite{Calabi} proved the uniqueness of solutions to (\ref{Complex Monge-Ampere Equation}). In \cite{Yau}, Yau solved Calabi's conjecture by proving existence of solutions to (\ref{Complex Monge-Ampere Equation}) when $F$ satisfies $\int_{M}e^{F}\omega^{n}=\int_{M}\omega^{n}$.

When $(M,\omega,J)$ is a compact Hermitian manifold, (\ref{Complex Monge-Ampere Equation}) has been studied under some assumptions on $\omega$ (see \cite{Chu15,GL10,Hanani,TW10a}). For general $\omega$, up to adding a unique constant to $F$, the existence and uniqueness of solutions were proved by Cherrier \cite{Cherrier} for $n=2$ (and under assumption $d(\omega^{n-1})=0$ when $n>2$) and by Tosatti-Weinkove \cite{TW10b} for any dimensions.

When $(M,\omega,J)$ is a compact almost Hermitian manifold, Chu-Tosatti-Weinkove \cite{CTW} proved the existence and uniqueness of solutions to (\ref{Complex Monge-Ampere Equation}), up to adding a unique constant to $F$.

There are many results of complex Monge-Amp\`{e}re equation and complex Monge-Amp\`{e}re type equation, we refer the reader to \cite{Chu16,DZZ,GL12,GS,Li,PSS,Sun16,Szekelyhidi,STW,Tian83,Tian14,TWWY,TW13a,TW13b,TWY,Wang,Weinkove07,Zhang}.

For the parabolic Monge-Amp\`{e}re equation (\ref{Parabolic Monge-Ampere Equation}), when $(M,\omega,J)$ is a compact K\"{a}hler manifold, Cao \cite{Cao} proved that there exists a smooth solution for all time (long time existence) and the normalization of this solution converges smoothly to the solution of complex Monge-Amp\`{e}re equation. When $(M,\omega,J)$ is a compact Hermitian manifold, similar results were proved by Gill \cite{Gill}. And Sun \cite{Sun15} proved the analogous results for the parabolic Monge-Amp\`{e}re type equation.

As we can see, the results in \cite{Cao,Gill,Sun15} were proved when the almost complex structure $J$ is integrable. For non-integrable almost complex structure, we prove the following result in this paper.

\begin{theorem}\label{Main Theorem}
Let $(M,\omega,J)$ be a compact almost Hermitian manifold of real dimension $2n$. For the parabolic Monge-Amp\`{e}re equation (\ref{Parabolic Monge-Ampere Equation}) on $(M,\omega,J)$, we have
\begin{enumerate}[(1)]
  \item There exists a unique smooth solution $\varphi$ for $t\in[0,\infty)$.\\
  \item Let $\tilde{\varphi}$ be the normalization of $\varphi$, i.e.,
        \begin{equation*}
        \tilde{\varphi}=\varphi-\int_{M}\varphi~\omega^{n}.
        \end{equation*}
        Then $\tilde{\varphi}$ converges smoothly to a function $\tilde{\varphi}_{\infty}$ as $t\rightarrow\infty$. And $\tilde{\varphi}_{\infty}$ is the unique solution of (\ref{Complex Monge-Ampere Equation}) on $(M,\omega,J)$, up to adding a unique real constant $b$ to $F$.
\end{enumerate}
\end{theorem}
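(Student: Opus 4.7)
My plan is to adapt the parabolic strategy of Cao \cite{Cao} and Gill \cite{Gill} to the almost Hermitian setting, importing the elliptic a priori machinery of Chu-Tosatti-Weinkove \cite{CTW} to handle the torsion arising from non-integrability of $J$. Short-time existence and uniqueness on a maximal interval $[0,T_{\max})$ is standard: at any smooth $\varphi$ with $\tilde\omega>0$, equation (\ref{Parabolic Monge-Ampere Equation}) is uniformly parabolic by concavity of $\log\det$, and the maximum principle applied to the difference of two solutions gives uniqueness. To prove part (1) it then suffices to produce, for every $T<T_{\max}$, uniform $C^\infty$ bounds on $M\times[0,T]$ depending only on the data and $T$; then $T_{\max}=\infty$.

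The a priori estimates proceed in the usual hierarchy. Differentiating (\ref{Parabolic Monge-Ampere Equation}) in $t$ yields $\partial_t\dot\varphi=\mathrm{tr}_{\tilde\omega}(\sqrt{-1}\partial\bar\partial\dot\varphi)$, so the maximum principle bounds $\|\dot\varphi\|_{C^0}$ by its initial value, which is equivalent to $C^{-1}\omega^n\leq\tilde\omega^n\leq C\omega^n$. Applying a complex Monge-Amp\`ere Moser iteration as in \cite{TW10b,CTW} slicewise then gives $\|\varphi-\sup_M\varphi\|_{C^0}\leq C$, and integrating $\dot\varphi$ in time controls $\sup_M\varphi$, yielding $\|\varphi\|_{C^0([0,T])}\leq C(T)$. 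For the second-order bound $\mathrm{tr}_\omega\tilde\omega\leq C$, I would apply the parabolic maximum principle to a quantity of the form $\log\lambda_1(\tilde\omega)+h(\varphi)$, where $\lambda_1$ is the largest eigenvalue of $\tilde\omega$ with respect to $\omega$ and $h$ is the test function from \cite{CTW} designed to absorb the torsion terms from non-integrability of $J$; the parabolic $-\partial_t$ modification contributes with the favorable sign at a spacetime maximum. A parabolic Evans-Krylov argument in the almost Hermitian setting then gives $C^{2,\alpha}$, and parabolic Schauder bootstrap gives $C^\infty$.

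For part (2), the same calculation shows that $\sup_M\dot\varphi$ is non-increasing and $\inf_M\dot\varphi$ is non-decreasing in $t$, so both converge to a common value $b\in\mathbb{R}$ as soon as $\mathrm{osc}_M\dot\varphi\to 0$. Combining a Poincar\'e inequality for the evolving metric with the uniform higher-order bounds, I would upgrade this to exponential decay $\mathrm{osc}_M\dot\varphi(t)\leq Ce^{-\delta t}$. Since $\partial_t\tilde\varphi=\dot\varphi-\int_M\dot\varphi\,\omega^n$ then also decays exponentially, integration in $t$ gives a bound on $\|\tilde\varphi\|_{C^0}$ uniform in $t$; combined with the a priori estimates applied to $\tilde\varphi$ (which differs from $\varphi$ by a function of $t$ alone and so induces the same $\tilde\omega$), this yields uniform $C^\infty$ bounds for all $t$. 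Arzel\`a-Ascoli together with uniqueness of the elliptic limit from \cite{CTW} upgrades subsequential convergence to smooth convergence $\tilde\varphi\to\tilde\varphi_\infty$, and passing to the limit in (\ref{Parabolic Monge-Ampere Equation}) identifies $\tilde\varphi_\infty$ as the unique solution of (\ref{Complex Monge-Ampere Equation}) with $F$ replaced by $F+b$.

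The main obstacle will be the second-order estimate: in the non-integrable setting, commutators such as $[\nabla_i,\nabla_{\bar j}]$ produce first-order torsion terms and ``$(0,2)$''-type second-derivative contributions absent in the Hermitian case, and the naive $\log\mathrm{tr}_\omega\tilde\omega$ barrier is not sufficient to absorb them. I expect the proof to go through because the auxiliary quantity and the choice of unitary frame in \cite{CTW} were engineered precisely for this difficulty; the parabolic setting only contributes a $-\partial_t$ term with the favorable sign at a spacetime maximum, so no essentially new idea beyond the elliptic estimate is needed.
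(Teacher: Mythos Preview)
Your outline for part (1) follows the paper's route: maximum principle for $\dot\varphi$, the $C^0$ oscillation estimate from \cite{CTW}, then higher-order estimates and Schauder bootstrap. Two points where your sketch is too coarse. First, the paper (following \cite{CTW}) inserts a separate \emph{gradient} estimate $|\partial\varphi|_g\le C$, obtained from the barrier $e^{f(\tilde\varphi)}|\partial\varphi|_g^2$, and this bound is used as input in the second-order argument. Second, the second-order quantity actually used is $\log\lambda_1(\nabla^2\varphi)+h(|\partial\varphi|_g^2)+e^{-A(\tilde\varphi-\sup\tilde\varphi)}$, where $\lambda_1$ is the largest eigenvalue of the \emph{real} Hessian, not of $\tilde\omega$; in the non-integrable case a bound on $\mathrm{tr}_\omega\tilde\omega$ does not control the $(2,0)$ and $(0,2)$ pieces of $\nabla^2\varphi$, and it is the full real $C^2$ bound that feeds into the parabolic $C^{2,\alpha}$ result cited from \cite{Chu16}. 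You do gesture toward the CTW quantity in your final paragraph, so these are calibration issues rather than errors.

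The substantive divergence is in part (2). You propose to obtain the exponential decay $\mathrm{osc}_M\dot\varphi\le Ce^{-\delta t}$ from ``a Poincar\'e inequality for the evolving metric.'' The paper instead devotes an entire section to a Li--Yau-type \emph{Harnack inequality} for positive solutions of $\partial_t u=Lu$ on the almost Hermitian background (generalizing \cite{LY} in the spirit of \cite{Gill,Weinkove04}), and applies it to $\sup\dot\varphi-\dot\varphi$ and $\dot\varphi-\inf\dot\varphi$ to iterate the oscillation down geometrically. Your energy route faces a genuine obstacle here: once $J$ is non-integrable (indeed already when $\omega$ is merely non-K\"ahler) the operator $L$ is not self-adjoint with respect to any natural volume form, so integrating $\dot\varphi\, L\dot\varphi$ by parts leaves first-order torsion remainders of fixed size, and there is no reason the Poincar\'e constant of the evolving metric dominates them; the differential inequality $\partial_t\int\psi^2\le -c\int\psi^2$ is not available without an additional idea. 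The Harnack argument is pointwise and sidesteps this issue entirely, at the cost of its own torsion bookkeeping in the gradient computation.
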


The organization of this paper is as follows: In Section 2, we introduce some notations and basic results which we use in this paper. In Section 3 through 5, we derive some estimates of $\varphi$. In Section 6, we use these estimates to prove (1) of Theorem \ref{Main Theorem}. In Section 7, we build up the Harnack inequality for positive solutions to the heat type equation on compact almost Hermitian manifold $(M,\omega,J)$, which is the generalized version of Theorem 2.2 in \cite{LY}. In Section 8, we apply this Harnack inequality to prove (2) of Theorem \ref{Main Theorem}.

\bigskip
\noindent\textbf{Acknowledgments:} The author would like to thank his advisor Professor Gang Tian for encouragement and support. The author would also like to thank Professor Valentino Tosatti and Professor Ben Weinkove for suggesting this problem and helpful suggestions. This work was carried out while the author was visiting the Department of Mathematics at Northwestern University, supported by the China Scholarship Council (File No. 201506010010). The author would like to thank the China Scholarship Council for supporting this visiting. The author would also like to thank the Department of Mathematics at Northwestern University for its hospitality and for providing a good academic environment.

\section{Preliminaries}
Let $M$ be a compact manifold of real dimension $2n$ and $J$ be an almost complex structure on $M$. Then we have decomposition $T_{\mathbb{C}}M=T_{\mathbb{C}}^{(1,0)}M\oplus T_{\mathbb{C}}^{(0,1)}M$, where $T_{\mathbb{C}}M$ is the complexification of $TM$, $T_{\mathbb{C}}^{(1,0)}M$ and $T_{\mathbb{C}}^{(0,1)}M$ are the $\pm\sqrt{-1}$ eigenspaces of $J$. For any $1$ form $\alpha$ on $M$, we define
\begin{equation*}
J\alpha(V)=-\alpha(JV)
\end{equation*}
for $V\in TM$. By this definition, the complexified cotangent space $T_{\mathbb{C}}^{*}M$ has the similar decomposition as $T_{\mathbb{C}}M$. By this decomposition, we introduce the definitions of $(1,0)$ form and $(0,1)$ form. More generally, we can also introduce the definition of $(p,q)$ form. For any $(p,q)$ form $\beta$, we define $\partial\beta$ and $\overline{\partial}\beta$ by $\partial\beta=(d\beta)^{(p+1,q)}$ and $\overline{\partial}\beta=(d\beta)^{(p,q+1)}$. It then follows that, for any smooth function $f$ on $M$, we have
\begin{equation*}
\begin{split}
(dJdf)^{(1,1)} & = (-\sqrt{-1}d\partial f+\sqrt{-1}d\overline{\partial} f)^{(1,1)}\\
& = 2\sqrt{-1}\partial\overline{\partial} f.
\end{split}
\end{equation*}
This is the reason why we use $\sqrt{-1}\partial\overline{\partial}\varphi$ to denote $\frac{1}{2}(dJdf)^{(1,1)}$ in Section 1. We also have the following formula (see e.g. \cite[(2.5)]{HL})
\begin{equation*}
(\partial\overline{\partial}f)(V_{1},\overline{V}_{2})=V_{1}\overline{V}_{2}(f)-[V_{1},\overline{V}_{2}]^{(0,1)}(f)
\end{equation*}
for any $V_{1},V_{2}\in T_{\mathbb{C}}^{(1,0)}M$.

Let $g$ be a Riemannian metric on $M$. We recall that $(M,g,J)$ is an almost Hermitian manifold if $g$ and $J$ are compatible, i.e.,
\begin{equation*}
g(JV_{1},JV_{2})=g(V_{1},V_{2})
\end{equation*}
for any $V_{1},V_{2}\in TM$. We can define the corresponding $(1,1)$ form
\begin{equation*}
\omega(V_{1},V_{2})=g(JV_{1},V_{2})
\end{equation*}
for any $V_{1},V_{2}\in TM$. It is clear that
\begin{equation*}
g(V_{1},V_{2})=\omega(V_{1},JV_{2}).
\end{equation*}
And $g$ is called the corresponding Riemannian metric of $(M,\omega,J)$. For convenience, we often use $(M,\omega,J)$ to denote $(M,g,J)$.

For (\ref{Parabolic Monge-Ampere Equation}), we use $\tilde{\omega}$ to denote $\omega+\sqrt{-1}\partial\overline{\partial}\varphi$. Here we omit time $t$ when no confusion will arise. Let $\tilde{g}$ be the corresponding Riemannian metric of $(M,\tilde{\omega},J)$.

We shall use the following notions, for a smooth function $f$ on $M$ and local frame $\{e_{i}\}_{i=1}^{n}$ for $T_{\mathbb{C}}^{(1,0)}M$,
\begin{equation*}
|\partial f|_{g}^{2}=g^{i\overline{j}}e_{i}(f)\overline{e}_{j}(f) \quad \text{and} \quad |\partial f|_{\tilde{g}}^{2}=\tilde{g}^{i\overline{j}}e_{i}(f)\overline{e}_{j}(f).
\end{equation*}
For convenience, we often use $f_{i}$ and $f_{\overline{i}}$ to denote $e_{i}(f)$ and $\overline{e}_{i}(f)$, respectively. As in \cite{CTW,Plis}, we define a operator
\begin{equation}\label{Definition of operator L}
\begin{split}
L(f) & = \tilde{g}^{i\overline{j}}\partial\overline{\partial}f(e_{i},\overline{e}_{j})\\
& = \tilde{g}^{i\overline{j}}\left(e_{i}\overline{e}_{j}(f)-[e_{i},\overline{e}_{j}]^{(0,1)}(f)\right).
\end{split}
\end{equation}
It is clear that $L$ is a second order elliptic operator. Since $L$ is the linearized operator of (\ref{Parabolic Monge-Ampere Equation}), by standard parabolic theory, there exists a smooth solution $\varphi$ to (\ref{Parabolic Monge-Ampere Equation}) on $[0,T)$, where $[0,T)$ is the maximal time interval and $T\in(0,\infty]$.

In this paper, we say a constant is uniform if it depends only on $(M,\omega,J)$, $F$ and $\varphi_{0}$. And we use often use $C$ to denote a uniform constant, which may differ from line to line. We shall point out that we use Einstein notation convention throughout this paper. Sometimes, we will include the summation for clarity.

\section{Oscillation estimate}
In this section, we prove the oscillation estimate of solution $\varphi$ to (\ref{Parabolic Monge-Ampere Equation}). First, we need the following lemma.
\begin{lemma}\label{varphi t estimate}
Let $\varphi$ be the solution of (\ref{Parabolic Monge-Ampere Equation}). Then we have
\begin{equation*}
\sup_{M\times[0,T)}\left|\frac{\partial\varphi}{\partial t}(x,t)\right|\leq \left\|\log\frac{(\omega+\sqrt{-1}\partial\overline{\partial}\varphi_{0})^{n}}{\omega^{n}}\right\|_{L^{\infty}(M)}+ \|F\|_{L^{\infty}(M)},
\end{equation*}
where $[0,T)$ is the maximal time interval of solution $\varphi$.
\end{lemma}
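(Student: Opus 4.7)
The natural approach is to differentiate the equation in $t$ to find a parabolic equation satisfied by $\psi := \partial\varphi/\partial t$, and then apply the maximum principle on the compact manifold $M$.

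\medskip

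\noindent\textbf{Step 1: Derive the evolution equation for $\psi$.} From (\ref{Parabolic Monge-Ampere Equation}),
$$\psi = \log\frac{\tilde{\omega}^{n}}{\omega^{n}} - F.$$
Differentiating in $t$ and using Jacobi's formula for $\partial_{t}\log\det\tilde{g}_{i\overline{j}}$, I expect to obtain
$$\frac{\partial\psi}{\partial t} = \tilde{g}^{i\overline{j}}\,\partial_{t}\tilde{g}_{i\overline{j}} = \tilde{g}^{i\overline{j}}\,\partial\overline{\partial}\psi(e_{i},\overline{e}_{j}) = L(\psi),$$
where the second equality uses $\tilde{g}_{i\overline{j}} = g_{i\overline{j}} + \partial\overline{\partial}\varphi(e_{i},\overline{e}_{j})$ and the fact that the frame $\{e_{i}\}$ and the bracket terms $[e_{i},\overline{e}_{j}]^{(0,1)}$ are time-independent, so time differentiation commutes with the formula $(\partial\overline{\partial}f)(V_{1},\overline{V}_{2}) = V_{1}\overline{V}_{2}(f) - [V_{1},\overline{V}_{2}]^{(0,1)}(f)$ recalled in Section 2. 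This identifies the operator on the right exactly with $L$ from (\ref{Definition of operator L}).

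\medskip

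\noindent\textbf{Step 2: Apply the maximum principle.} Since $L$ is second order linear elliptic with no zeroth-order term and $M$ is compact, standard parabolic maximum principle for $\partial_{t}\psi = L(\psi)$ gives that $t \mapsto \sup_{M}\psi(\cdot,t)$ is non-increasing and $t \mapsto \inf_{M}\psi(\cdot,t)$ is non-decreasing on $[0,T)$. Therefore
$$\sup_{M\times[0,T)}|\psi| \leq \sup_{M}|\psi(\cdot,0)|.$$

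\medskip

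\noindent\textbf{Step 3: Evaluate at $t=0$.} The initial condition $\varphi(\cdot,0)=\varphi_{0}$ together with the PDE gives
$$\psi(\cdot,0) = \log\frac{(\omega+\sqrt{-1}\partial\overline{\partial}\varphi_{0})^{n}}{\omega^{n}} - F,$$
and the triangle inequality yields the stated bound.

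\medskip

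\noindent\textbf{Expected obstacle.} The computation is entirely routine in the K\"ahler or Hermitian case; the only point that requires care in the present almost Hermitian setting is Step 1, namely checking that the non-integrable bracket term $[e_{i},\overline{e}_{j}]^{(0,1)}(\varphi)$ appearing in $\partial\overline{\partial}\varphi(e_{i},\overline{e}_{j})$ passes through the $t$-derivative cleanly and reassembles into $L(\psi)$. Because the frame $\{e_{i}\}$ and hence the brackets do not depend on $t$, this is clean, and no genuine difficulty arises.
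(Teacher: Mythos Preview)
Your proof is correct and follows essentially the same approach as the paper: differentiate the equation in $t$ to obtain $(L-\partial_t)(\partial\varphi/\partial t)=0$, apply the maximum principle on the compact manifold, and evaluate at $t=0$ using the initial condition. Your additional remark about the time-independence of the frame and bracket terms is a helpful clarification that the paper leaves implicit.
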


\begin{proof}
Differentiating (\ref{Parabolic Monge-Ampere Equation}) with respect to $t$, we obtain
\begin{equation*}
\left(L-\frac{\partial}{\partial t}\right)\frac{\partial\varphi}{\partial t}=0,
\end{equation*}
where $L$ is defined by (\ref{Definition of operator L}). By the maximum principle, it is clear that
\begin{equation}\label{varphi t estimate equation 1}
\sup_{M\times[0,T)}\left|\frac{\partial\varphi}{\partial t}(x,t)\right|\leq\sup_{M}\left|\frac{\partial\varphi}{\partial t}(x,0)\right|.
\end{equation}
By (\ref{Parabolic Monge-Ampere Equation}), we have
\begin{equation}\label{varphi t estimate equation 2}
\frac{\partial\varphi}{\partial t}(x,0)=\log\frac{(\omega+\sqrt{-1}\partial\overline{\partial}\varphi_{0})^{n}}{\omega^{n}}-F(x).
\end{equation}
Combining (\ref{varphi t estimate equation 1}) and (\ref{varphi t estimate equation 2}), we get
\begin{equation*}
\sup_{M\times[0,T)}\left|\frac{\partial\varphi}{\partial t}(x,t)\right|\leq
\left\|\log\frac{(\omega+\sqrt{-1}\partial\overline{\partial}\varphi_{0})^{n}}{\omega^{n}}\right\|_{L^{\infty}(M)}+ \|F\|_{L^{\infty}(M)}.
\end{equation*}
\end{proof}

Next, we use Lemma \ref{varphi t estimate} to prove the oscillation estimate.
\begin{proposition}\label{Oscillation estimate}
Let $\varphi$ be the solution of (\ref{Parabolic Monge-Ampere Equation}). There exists a constant $C$ depending only on $(M,\omega,J)$, $F$ and $\varphi_{0}$ such that
\begin{equation*}
\sup_{M\times[0,T)}|\tilde{\varphi}(x,t)| \leq \sup_{t\in[0,T)}\left(\sup_{x\in M}\varphi(x,t)-\inf_{x\in M}\varphi(x,t)\right)\leq C.
\end{equation*}
where $\tilde{\varphi}=\varphi-\int_{M}\varphi~\omega^{n}$ and $[0,T)$ is the maximal time interval of solution $\varphi$.
\end{proposition}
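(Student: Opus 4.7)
The plan is to reduce the parabolic oscillation estimate at each time slice to the elliptic $C^{0}$ oscillation bound for the complex Monge-Amp\`ere equation on compact almost Hermitian manifolds established in \cite{CTW}. The parabolic-specific ingredient that makes this reduction work is precisely the uniform bound on $\partial\varphi/\partial t$ from Lemma \ref{varphi t estimate}.

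For the left inequality, I observe that $\tilde{\varphi}(\cdot,t)$ is by definition the mean-zero normalization of $\varphi(\cdot,t)$ (with respect to the normalized volume form $\omega^{n}$), so at each $t$ it must attain both nonnegative and nonpositive values on $M$. Hence pointwise $|\tilde{\varphi}(x,t)|\leq \sup_{y\in M}\varphi(y,t)-\inf_{y\in M}\varphi(y,t)$, and the first inequality follows by taking the supremum over $M\times[0,T)$ on the left and over $t$ on the right.

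For the main inequality, I rewrite the parabolic equation (\ref{Parabolic Monge-Ampere Equation}) at a fixed $t\in[0,T)$ as the elliptic complex Monge-Amp\`ere equation
\begin{equation*}
(\omega+\sqrt{-1}\partial\overline{\partial}\varphi(\cdot,t))^{n}=e^{F_{t}}\omega^{n},\qquad F_{t}:=F+\frac{\partial\varphi}{\partial t}(\cdot,t),
\end{equation*}
with $\omega+\sqrt{-1}\partial\overline{\partial}\varphi(\cdot,t)>0$. By Lemma \ref{varphi t estimate}, $\|F_{t}\|_{L^{\infty}(M)}$ is bounded by a constant depending only on $(M,\omega,J)$, $F$ and $\varphi_{0}$, uniformly in $t$. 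I then appeal to the elliptic oscillation estimate of Chu-Tosatti-Weinkove \cite{CTW}: it produces a constant $C$, depending only on $(M,\omega,J)$ and an upper bound on the $L^{\infty}$-norm of the right-hand side exponent, such that any solution of this elliptic equation satisfies $\sup_{M}\varphi(\cdot,t)-\inf_{M}\varphi(\cdot,t)\leq C$. Since oscillation is translation invariant in $\varphi$, the normalization $\sup_{M}\varphi=0$ imposed in (\ref{Complex Monge-Ampere Equation}) plays no role here, and the constant can be taken uniform in $t$.

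The main obstacle is not this reduction itself, which is routine once Lemma \ref{varphi t estimate} is available, but the underlying elliptic oscillation bound---proved in \cite{CTW} via a delicate adaptation of Moser iteration and integration-by-parts identities to the non-integrable almost complex setting. I will use it as a black box rather than reproducing the argument.
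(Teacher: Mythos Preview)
Your proof is correct and follows essentially the same route as the paper: rewrite the parabolic equation at each fixed time as the elliptic Monge-Amp\`ere equation with right-hand side $e^{F+\partial_{t}\varphi}\omega^{n}$, invoke Lemma~\ref{varphi t estimate} to bound the new exponent uniformly in $t$, and then apply the elliptic oscillation estimate of \cite[Proposition~3.1]{CTW}. Your treatment of the left inequality (via the mean-zero property of $\tilde{\varphi}$) is slightly more explicit than the paper's, which simply says it follows ``by the definition of $\tilde{\varphi}$''.
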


\begin{proof}
First, (\ref{Parabolic Monge-Ampere Equation}) can be written as
\begin{equation*}
(\omega+\sqrt{-1}\partial\overline{\partial}\varphi)^{n}=e^{\tilde{F}}\omega^{n},
\end{equation*}
where $\tilde{F}=F+\frac{\partial\varphi}{\partial t}$. By Proposition 3.1 in \cite{CTW}, there exists a constant $C$ depending only on $(M,\omega,J)$ and upper bound of $\sup_{M\times[0,T)}|\tilde{F}(x,t)|$ such that, for any $t\in[0,T)$,
\begin{equation}\label{Oscillation estimate equation 1}
\sup_{x\in M}\varphi(x,t)-\inf_{x\in M}\varphi(x,t)\leq C(M,\omega,J,\sup_{M\times[0,T)}|\tilde{F}(x,t)|).
\end{equation}
Thus, by Lemma \ref{varphi t estimate}, we have
\begin{equation}\label{Oscillation estimate equation 2}
\begin{split}
\sup_{M\times[0,T)}|\tilde{F}(x,t)| & \leq \|F\|_{L^{\infty}(M)}+\sup_{M\times[0,T)}\left|\frac{\partial\varphi}{\partial t}(x,t)\right|\\
& \leq 2\|F\|_{L^{\infty}(M)}+\left\|\log\frac{(\omega+\sqrt{-1}\partial\overline{\partial}\varphi_{0})^{n}}{\omega^{n}}\right\|_{L^{\infty}(M)} .
\end{split}
\end{equation}
Combining (\ref{Oscillation estimate equation 1}) and (\ref{Oscillation estimate equation 2}), for any $t\in[0,T)$, it is clear that
\begin{equation*}
\sup_{x\in M}\varphi(x,t)-\inf_{x\in M}\varphi(x,t)\leq C,
\end{equation*}
for a uniform constant $C$. Hence, by the definition of $\tilde{\varphi}$, we complete the proof.
\end{proof}

\section{First order estimate}
In this section, we prove the first order estimate of solution $\varphi$ to (\ref{Parabolic Monge-Ampere Equation}).
\begin{proposition}\label{First order estimate}
Let $\varphi$ be the solution of (\ref{Parabolic Monge-Ampere Equation}). There exists a constant $C$ depending only on $(M,\omega,J)$, $F$ and $\varphi_{0}$ such that
\begin{equation*}
\sup_{M\times[0,T)}|\partial\varphi|_{g}^{2}(x,t)\leq C,
\end{equation*}
where $[0,T)$ is the maximal time interval of solution $\varphi$.
\end{proposition}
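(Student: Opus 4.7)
The plan is to apply the parabolic maximum principle to a Cherrier-type test quantity, most naturally
\[
Q = \log |\partial\varphi|_{g}^{2} - A\varphi,
\]
where $A$ is a large uniform constant to be chosen. Since $\varphi$ is uniformly bounded (Proposition~\ref{Oscillation estimate} applied after shifting by the spatial average), any upper bound on $\sup_{M\times[0,T)} Q$ is equivalent to the desired gradient bound. If $Q$ attains its supremum at $t=0$, the bound follows from the regularity of $\varphi_{0}$; otherwise at an interior maximum $(x_{0},t_{0})$ we have $\partial Q = \overline{\partial} Q = 0$ and $(L-\partial_{t})Q \le 0$, which will force $|\partial\varphi|_{g}^{2}(x_{0},t_{0})$ to be bounded.

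The zeroth order piece is easy: using $L(\varphi)=\tilde g^{i\overline j}\varphi_{i\overline j}=n-\mathrm{tr}_{\tilde g}g$ we get
\[
(L-\partial_{t})(-A\varphi) = A\,\mathrm{tr}_{\tilde g}g - An + A\frac{\partial\varphi}{\partial t},
\]
and the $\partial_t\varphi$ contribution is controlled by Lemma~\ref{varphi t estimate}; the key positive term is $A\,\mathrm{tr}_{\tilde g}g$. For the first order piece, write $u=|\partial\varphi|_{g}^{2}$ and expand
\[
L\log u = \frac{Lu}{u} - \frac{|\partial u|_{\tilde g}^{2}}{u^{2}}.
\]
The expansion of $Lu$ using (\ref{Definition of operator L}) produces the good second-order term $\tilde g^{i\overline j}g^{k\overline l}(\varphi_{ik}\varphi_{\overline j\overline l}+\varphi_{i\overline l}\varphi_{k\overline j})$, together with mixed third-derivative terms $\tilde g^{i\overline j}g^{k\overline l}\varphi_{ki\overline j}\varphi_{\overline l}$ and its conjugate. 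The third-derivative terms are treated by differentiating the equation $\log\det\tilde g = F+\partial_{t}\varphi+\log\det g$ in $e_{k}$, yielding $\tilde g^{i\overline j}\tilde g_{i\overline j,k} = (F+\partial_{t}\varphi)_k + (\log\det g)_k$, and then commuting $\varphi_{ki\overline j}$ back to $\varphi_{ik\overline j}=\tilde g_{i\overline j,k}-g_{i\overline j,k}$ modulo torsion. The critical point identity $\partial u /u = A\,\partial\varphi$ converts $|\partial u|_{\tilde g}^{2}/u^{2}$ into $A^{2}|\partial\varphi|_{\tilde g}^{2} \le A^{2}\,\mathrm{tr}_{\tilde g}g\cdot u$, and this must be absorbed using the Bochner quadratic term $\tilde g^{i\overline j}g^{k\overline l}\varphi_{i\overline l}\varphi_{k\overline j}/u$ via Cauchy--Schwarz.

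The main obstacle is the non-integrability of $J$: the commutator $[e_{i},\overline e_{j}]^{(0,1)}$ sitting inside the definition of $L$, together with torsion of $\omega$ appearing whenever one interchanges $e_{i}, \overline e_{j}$, produces additional error terms of the shapes $O(|\partial\varphi|)\cdot\mathrm{tr}_{\tilde g}g$ and $O(|\partial\varphi|^{2})\cdot\mathrm{tr}_{\tilde g}g$ in $Lu$, as well as Nijenhuis-tensor contributions when passing between $e_{i}e_{k}\varphi$ and $e_{k}e_{i}\varphi$. These are absorbed into $A\,\mathrm{tr}_{\tilde g}g$ provided $A$ is chosen large in terms of the torsion of $\omega$, the Nijenhuis tensor of $J$, and the $L^{\infty}$ bounds of Proposition~\ref{Oscillation estimate} and Lemma~\ref{varphi t estimate}. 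This is the almost Hermitian adaptation of the elliptic Cherrier/Tosatti--Weinkove argument carried out in \cite{CTW}; the only new feature is the harmless $\partial_{t}$ term, handled by Lemma~\ref{varphi t estimate}. After these absorptions, $(L-\partial_{t})Q \le 0$ at $(x_{0},t_{0})$ yields a uniform upper bound on $u(x_{0},t_{0})$, and hence on $\sup Q$, which gives the desired estimate.
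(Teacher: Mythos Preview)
Your overall strategy is right (maximum principle applied to a test quantity combining $|\partial\varphi|_g^2$ and a function of $\varphi$), but the specific test function $\log|\partial\varphi|_g^2 - A\varphi$ is too weak in the almost Hermitian setting, and the absorption you describe does not close.  Writing $u=|\partial\varphi|_g^2$, the expansion $L\log u = Lu/u - |\partial u|_{\tilde g}^2/u^2$ has the bad negative term $-|\partial u|_{\tilde g}^2/u^2$.  If you use the critical--point identity $\partial u/u = A\,\partial\varphi$ this becomes $-A^2|\partial\varphi|_{\tilde g}^2$; if instead you try to absorb it directly into the Bochner term via Cauchy--Schwarz you lose a factor of two, since $|e_i u|^2 \le 2u\sum_k\bigl(|e_ie_k\varphi|^2+|e_i\bar e_k\varphi|^2\bigr)$ while the good term in $Lu/u$ carries coefficient~$1$.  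With $f=-A\varphi$ linear there is no $f''|\partial\varphi|_{\tilde g}^2$ contribution to compensate, and your claim that ``this must be absorbed using the Bochner quadratic term $\tilde g^{i\bar j}g^{k\bar l}\varphi_{i\bar l}\varphi_{k\bar j}/u$ via Cauchy--Schwarz'' is exactly the step that fails.  This is not special to the parabolic setting: already in the elliptic Hermitian case the simple linear ansatz is known to be insufficient, which is why \cite{CTW} (which you invoke) does \emph{not} use $\log u - A\varphi$.

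The paper, following \cite{CTW}, takes $Q=e^{f(\tilde\varphi)}|\partial\varphi|_g^2$ with the double--exponential $f(s)=\frac{1}{12C_1}e^{-12C_1(s-\sup\tilde\varphi-1)}$.  The point is that $L(e^f)$ contributes $(f''+(f')^2)|\partial\varphi|_{\tilde g}^2$; after the cross term $2\mathrm{Re}\bigl(\tilde g^{i\bar i}e_i(u)\bar e_i(e^f)\bigr)$ is expanded via $e_i\bar e_k\varphi=\tilde g_{i\bar k}-g_{i\bar k}+[e_i,\bar e_k]^{(0,1)}\varphi$ and Cauchy--Schwarz (costing $(1+3\epsilon)(f')^2|\partial\varphi|_g^2|\partial\varphi|_{\tilde g}^2$), the net coefficient is $f''-3\epsilon(f')^2$, and $f,\epsilon$ are chosen so this and $-f'-C_1/\epsilon$ are both uniformly positive.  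One arrives at
\[
0 \ge C^{-1}|\partial\varphi|_g^2\,|\partial\varphi|_{\tilde g}^2 + C^{-1}|\partial\varphi|_g^2\,\mathrm{tr}_{\tilde g}g - C|\partial\varphi|_g^2 - C,
\]
and the argument is closed by the Young--type inequality
\[
|\partial\varphi|_{\tilde g}^2 + \mathrm{tr}_{\tilde g}g \;\ge\; C^{-1}\sum_i\Bigl(\tfrac{|\varphi_i|^2}{\tilde g_{i\bar i}}+\tilde g_{i\bar i}^{\,1/(n-1)}\Bigr) \;\ge\; C^{-1}|\partial\varphi|_g^{2/n},
\]
which you also do not mention.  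Both ingredients---the convex $f$ producing a positive $|\partial\varphi|_{\tilde g}^2$ coefficient, and this final inequality linking $|\partial\varphi|_{\tilde g}^2+\mathrm{tr}_{\tilde g}g$ back to $|\partial\varphi|_g$---are essential to the proof and absent from your outline.
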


\begin{proof}
We consider the quantity $Q=e^{f(\tilde{\varphi})}|\partial\varphi|_{g}^{2}$, where $\tilde{\varphi}=\varphi-\int_{M}\varphi~\omega^{n}$ and $f$ is to be determined later. For any $T'\in[0,T)$, we assume
\begin{equation*}
\max_{M\times[0,T']}Q(x,t)=Q(x_{0},t_{0}),
\end{equation*}
where $(x_{0},t_{0})\in M\times[0,T']$. Since $g$ is compatible with $J$, around $x_{0}$, we can find a local unitary frame $\{e_{i}\}_{i=1}^{n}$ (with respect to $g$) for $T_{\mathbb{C}}^{(1,0)}M$ such that $\tilde{g}_{i\overline{j}}(x_{0},t_{0})$ is diagonal.

By the maximum principle, at $(x_{0},t_{0})$, we have
\begin{equation}\label{First order estimate equation 1}
\begin{split}
0 & \geq \left(L-\frac{\partial}{\partial t}\right)Q\\
  & = \left(L-\frac{\partial}{\partial t}\right)\left(e^{f}|\partial\varphi|_{g}^{2}\right)\\
  & = e^{f}\left(L-\frac{\partial}{\partial t}\right)|\partial\varphi|_{g}^{2}+2\text{Re}\left(\tilde{g}^{i\overline{i}}e_{i}(|\partial\varphi|_{g}^{2})\overline{e}_{i}(e^{f})\right)+|\partial\varphi|_{g}^{2}\left(L-\frac{\partial}{\partial t}\right)e^{f}.
\end{split}
\end{equation}
For the first term of (\ref{First order estimate equation 1}), by direct calculation, we obtain
\begin{equation}\label{First order estimate equation 2}
\begin{split}
\left(L-\frac{\partial}{\partial t}\right)|\partial\varphi|_{g}^{2}
= &~~ \sum_{k}\tilde{g}^{i\overline{i}}\left(|e_{i}e_{k}(\varphi)|^{2}+|e_{i}\overline{e}_{k}(\varphi)|^{2}\right)\\
&~~ +\sum_{k}\varphi_{k}\left(\tilde{g}^{i\overline{i}}e_{i}\overline{e}_{i}\overline{e}_{k}(\varphi)
-\tilde{g}^{i\overline{i}}[e_{i},\overline{e}_{i}]^{(0,1)}\overline{e}_{k}(\varphi)-\left(\frac{\partial\varphi}{\partial t}\right)_{\overline{k}}\right)\\
&~~ +\sum_{k}\varphi_{\overline{k}}\left(\tilde{g}^{i\overline{i}}e_{i}\overline{e}_{i}e_{k}(\varphi)
-\tilde{g}^{i\overline{i}}[e_{i},\overline{e}_{i}]^{(0,1)}e_{k}(\varphi)-\left(\frac{\partial\varphi}{\partial t}\right)_{k}\right).
\end{split}
\end{equation}
In order to deal with the second and third terms of (\ref{First order estimate equation 2}), we compute
\begin{equation}\label{First order estimate equation 3}
\begin{split}
&~~\sum_{k}\varphi_{k}\tilde{g}^{i\overline{i}}\left(e_{i}\overline{e}_{i}\overline{e}_{k}(\varphi)
-[e_{i},\overline{e}_{i}]^{(0,1)}\overline{e}_{k}(\varphi)\right)\\
=&~~\sum_{k}\varphi_{k}\tilde{g}^{i\overline{i}}\left(\overline{e}_{k}e_{i}\overline{e}_{i}(\varphi)-\overline{e}_{k}[e_{i},\overline{e}_{i}]^{(0,1)}(\varphi)\right)\\
&~~+\sum_{k}\varphi_{k}\tilde{g}^{i\overline{i}}\left(-\overline{e}_{i}[\overline{e}_{k},e_{i}](\varphi)-e_{i}[\overline{e}_{k},\overline{e}_{i}](\varphi)+
[\overline{e}_{i},[\overline{e}_{k},e_{i}]](\varphi)+[\overline{e}_{k},[e_{i},\overline{e}_{i}]^{(0,1)}](\varphi)\right)\\
\geq&~~\sum_{k}\varphi_{k}\tilde{g}^{i\overline{i}}\left(\overline{e}_{k}e_{i}\overline{e}_{i}(\varphi)-\overline{e}_{k}[e_{i},\overline{e}_{i}]^{(0,1)}(\varphi)\right)\\
&~~-C|\partial\varphi|_{g}\sum_{k}\tilde{g}^{i\overline{i}}\left(|e_{i}e_{k}(\varphi)|+|e_{i}\overline{e}_{k}(\varphi)|\right)-C|\partial\varphi|_{g}^{2}\sum_{i}\tilde{g}^{i\overline{i}},
\end{split}
\end{equation}
for a uniform constant $C$. Now, applying $\overline{e}_{k}$ to (\ref{Parabolic Monge-Ampere Equation}), we get
\begin{equation}\label{First order estimate equation 4}
\left(\frac{\partial\varphi}{\partial t}\right)_{\overline{k}}
=\tilde{g}^{i\overline{i}}\left(\overline{e}_{k}e_{i}\overline{e}_{i}(\varphi)-\overline{e}_{k}[e_{i},\overline{e}_{i}]^{(0,1)}(\varphi)\right)-F_{\overline{k}}.
\end{equation}
Combining (\ref{First order estimate equation 3}) and (\ref{First order estimate equation 4}), it is clear that
\begin{equation}\label{First order estimate equation 5}
\begin{split}
&~~\sum_{k}\varphi_{k}\left(\tilde{g}^{i\overline{i}}e_{i}\overline{e}_{i}\overline{e}_{k}(\varphi)
-\tilde{g}^{i\overline{i}}[e_{i},\overline{e}_{i}]^{(0,1)}\overline{e}_{k}(\varphi)-\left(\frac{\partial\varphi}{\partial t}\right)_{\overline{k}}\right)\\
\geq&~~\sum_{k}\varphi_{k}F_{\overline{k}}-C|\partial\varphi|_{g}\sum_{k}\tilde{g}^{i\overline{i}}\left(|e_{i}e_{k}(\varphi)|+|e_{i}\overline{e}_{k}(\varphi)|\right)-C|\partial\varphi|_{g}^{2}\sum_{i}\tilde{g}^{i\overline{i}}.
\end{split}
\end{equation}
Similarly, we have
\begin{equation}\label{First order estimate equation 6}
\begin{split}
&~~\sum_{k}\varphi_{\overline{k}}\left(\tilde{g}^{i\overline{i}}e_{i}\overline{e}_{i}e_{k}(\varphi)
-\tilde{g}^{i\overline{i}}[e_{i},\overline{e}_{i}]^{(0,1)}e_{k}(\varphi)-\left(\frac{\partial\varphi}{\partial t}\right)_{k}\right)\\
\geq&~~\sum_{k}\varphi_{\overline{k}}F_{k}-C|\partial\varphi|_{g}\sum_{k}\tilde{g}^{i\overline{i}}\left(|e_{i}e_{k}(\varphi)|+|e_{i}\overline{e}_{k}(\varphi)|\right)-C|\partial\varphi|_{g}^{2}\sum_{i}\tilde{g}^{i\overline{i}}.
\end{split}
\end{equation}
Combining (\ref{First order estimate equation 2}), (\ref{First order estimate equation 5}), (\ref{First order estimate equation 6}) and the Cauchy-Schwarz inequality, for any $\epsilon\in(0,\frac{1}{2}]$, we obtain
\begin{equation}\label{First order estimate equation 7}
\begin{split}
\left(L-\frac{\partial}{\partial t}\right)|\partial\varphi|_{g}^{2}\geq
&~~(1-\epsilon)\sum_{k}\tilde{g}^{i\overline{i}}\left(|e_{i}e_{k}(\varphi)|^{2}+|e_{i}\overline{e}_{k}(\varphi)|^{2}\right)\\
&~~-\frac{C}{\epsilon}|\partial\varphi|_{g}^{2}\sum_{i}\tilde{g}^{i\overline{i}}+2\text{Re}\left(\sum_{k}\varphi_{k}F_{\overline{k}}\right).
\end{split}
\end{equation}
For the second term of (\ref{First order estimate equation 1}), since $\overline{e}_{i}(\tilde{\varphi})=\overline{e}_{i}(\varphi)$, we have
\begin{equation}\label{First order estimate equation 8}
\begin{split}
&~~2\text{Re}\left(\tilde{g}^{i\overline{i}}e_{i}(|\partial\varphi|_{g}^{2})\overline{e}_{i}(e^{f})\right)\\
=&~~2\text{Re}\left(\sum_{k}\tilde{g}^{i\overline{i}}e^{f}f'\varphi_{\overline{i}}\varphi_{k}e_{i}\overline{e}_{k}(\varphi)\right)
+2\text{Re}\left(\sum_{k}\tilde{g}^{i\overline{i}}e^{f}f'\varphi_{\overline{i}}\varphi_{\overline{k}}e_{i}e_{k}(\varphi)\right)\\
=&~~2\text{Re}\left(\sum_{k}\tilde{g}^{i\overline{i}}e^{f}f'\varphi_{\overline{i}}\varphi_{k}\left(\tilde{g}_{i\overline{k}}-g_{i\overline{k}}+[e_{i},\overline{e}_{k}]^{(0,1)}(\varphi)\right)\right)
+2\text{Re}\left(\sum_{k}\tilde{g}^{i\overline{i}}e^{f}f'\varphi_{\overline{i}}\varphi_{\overline{k}}e_{i}e_{k}(\varphi)\right)\\
\geq&~~2e^{f}f'|\partial\varphi|_{g}^{2}-2e^{f}f'|\partial\varphi|_{\tilde{g}}^{2}-\epsilon e^{f}(f')^{2}|\partial\varphi|_{g}^{2}|\partial\varphi|_{\tilde{g}}^{2}-\frac{Ce^{f}}{\epsilon}|\partial\varphi|_{g}^{2}\sum_{i}\tilde{g}^{i\overline{i}}\\
&~~-(1+2\epsilon)e^{f}(f')^{2}|\partial\varphi|_{g}^{2}|\partial\varphi|_{\tilde{g}}^{2}-(1-\epsilon)e^{f}\sum_{k}\tilde{g}^{i\overline{i}}|e_{i}e_{k}(\varphi)|^{2}\\
\geq&~~2e^{f}f'|\partial\varphi|_{g}^{2}-2e^{f}f'|\partial\varphi|_{\tilde{g}}^{2}-(1+3\epsilon)e^{f}(f')^{2}|\partial\varphi|_{g}^{2}|\partial\varphi|_{\tilde{g}}^{2}-\frac{Ce^{f}}{\epsilon}|\partial\varphi|_{g}^{2}\sum_{i}\tilde{g}^{i\overline{i}}\\
&~~-(1-\epsilon)e^{f}\sum_{k}\tilde{g}^{i\overline{i}}|e_{i}e_{k}(\varphi)|^{2},
\end{split}
\end{equation}
where we used the Cauchy-Schwarz inequality for the second-to-last inequality. For the third term of (\ref{First order estimate equation 1}), it is clear that
\begin{equation}\label{First order estimate equation 9}
\begin{split}
\left(L-\frac{\partial}{\partial t}\right)e^{f}
& = e^{f}\left(f''+(f')^{2}\right)|\partial\tilde{\varphi}|_{\tilde{g}}^{2}+ne^{f}f'
-e^{f}f'\sum_{i}\tilde{g}^{i\overline{i}}-e^{f}f'\frac{\partial\tilde{\varphi}}{\partial t}\\
& = e^{f}\left(f''+(f')^{2}\right)|\partial\varphi|_{\tilde{g}}^{2}+ne^{f}f'
-e^{f}f'\sum_{i}\tilde{g}^{i\overline{i}}-e^{f}f'\left(\frac{\partial\varphi}{\partial t}-\int_{M}\frac{\partial\varphi}{\partial t}~\omega^{n}\right),
\end{split}
\end{equation}
where we used $|\partial\tilde{\varphi}|_{\tilde{g}}^{2}=|\partial\varphi|_{\tilde{g}}^{2}$. Plugging (\ref{First order estimate equation 7}), (\ref{First order estimate equation 8}) and (\ref{First order estimate equation 9}) into (\ref{First order estimate equation 1}), at $(x_{0},t_{0})$, we get

\begin{equation}\label{First order estimate equation 10}
\begin{split}
\left(L-\frac{\partial}{\partial t}\right)\left(e^{f}|\partial\varphi|_{g}^{2}\right) \geq &~~
e^{f}\left(f''-3\epsilon(f')^{2}\right)|\partial\varphi|_{g}^{2}|\partial\varphi|_{\tilde{g}}^{2}+e^{f}(-f'-\frac{C_{1}}{\epsilon})|\partial\varphi|_{g}^{2}\sum_{i}\tilde{g}^{i\overline{i}}\\
&~~ -e^{f}f'\left(\frac{\partial\varphi}{\partial t}-\int_{M}\frac{\partial\varphi}{\partial t}~\omega^{n}\right)|\partial\varphi|_{g}^{2}+2e^{f}\text{Re}\left(\sum_{k}\varphi_{k}F_{\overline{k}}\right)\\
&~~+(n+2)e^{f}f'|\partial\varphi|_{g}^{2}-2e^{f}f'|\partial\varphi|_{\tilde{g}}^{2},
\end{split}
\end{equation}
for a uniform constant $C_{1}$. Now, we define
\begin{equation*}
f(\tilde{\varphi})=\frac{1}{12C_{1}}e^{-12C_{1}(\tilde{\varphi}-\sup_{M\times[0,T)}\tilde{\varphi}-1)} \text{~~and~~} \epsilon=2C_{1}e^{12C_{1}(\tilde{\varphi}(x_{0},t_{0})-\sup_{M\times[0,T)}\tilde{\varphi}-1)}.
\end{equation*}
By Proposition \ref{Oscillation estimate}, there exists a uniform constant $C$ such that
\begin{equation}\label{First order estimate equation 11}
f''-3\epsilon(f')^{2}\geq C^{-1} \text{~~and~~}
-f'-\frac{C_{1}}{\epsilon}\geq C^{-1}.
\end{equation}
Combining (\ref{First order estimate equation 1}), (\ref{First order estimate equation 10}), (\ref{First order estimate equation 11}), $f'<0$ and Lemma \ref{varphi t estimate}, at $(x_{0},t_{0})$, we get
\begin{equation}\label{First order estimate equation 12}
0\geq C^{-1}|\partial\varphi|_{g}^{2}|\partial\varphi|_{\tilde{g}}^{2}
+C^{-1}|\partial\varphi|_{g}^{2}\sum_{i}\tilde{g}^{i\overline{i}}-C|\partial\varphi|_{g}^{2}-C.
\end{equation}
On the other hand, by (\ref{Parabolic Monge-Ampere Equation}) and Lemma \ref{varphi t estimate}, we have
\begin{equation*}
C\omega^{n}\geq(\omega+\sqrt{-1}\partial\overline{\partial}\varphi)^{n}\geq C^{-1}\omega^{n},
\end{equation*}
for a uniform constant $C$. It then follows that
\begin{equation*}
\sum_{i}\tilde{g}^{i\overline{i}}\geq\left(\frac{\sum_{i}\tilde{g}_{i\overline{i}}}{\prod_{j}\tilde{g}_{j\overline{j}}}\right)^{\frac{1}{n-1}} \geq C^{-1}\left(\sum_{i}\tilde{g}_{i\overline{i}}\right)^{\frac{1}{n-1}}\geq C^{-1}C(n)\sum_{i}\tilde{g}_{i\overline{i}}^{\frac{1}{n-1}},
\end{equation*}
where $C(n)$ is a constant depending only on $n$. It then follows that
\begin{equation}\label{First order estimate equation 13}
\begin{split}
|\partial\varphi|_{\tilde{g}}^{2}+\sum_{i}\tilde{g}^{i\overline{i}} & \geq C^{-1}\sum_{i}\left(\frac{|\varphi_{i}|^{2}}{\tilde{g}_{i\overline{i}}}+\tilde{g}_{i\overline{i}}^{\frac{1}{n-1}}\right)\\
& \geq C^{-1}\sum_{i}|\varphi_{i}|^{\frac{2}{n}}\\
& \geq C^{-1}|\partial\varphi|_{g}^{\frac{2}{n}},
\end{split}
\end{equation}
where we used Young's inequality in the second line. Combining (\ref{First order estimate equation 12}) and (\ref{First order estimate equation 13}), we get
\begin{equation*}
0\geq C^{-1}|\partial\varphi|_{g}^{2+\frac{2}{n}}(x_{0},t_{0})-C|\partial\varphi|_{g}^{2}(x_{0},t_{0})-C.
\end{equation*}
It then follows that
\begin{equation*}
|\partial\varphi|_{g}^{2}(x_{0},t_{0})\leq C.
\end{equation*}
Therefore, by the definition of $(x_{0},t_{0})$ and Proposition \ref{Oscillation estimate}, we have
\begin{equation*}
\max_{M\times[0,T']}Q(x,t)=Q(x_{0},t_{0})\leq C,
\end{equation*}
which implies
\begin{equation*}
\max_{M\times[0,T']}|\partial\varphi|_{g}^{2}(x,t)\leq C,
\end{equation*}
for a uniform constant $C$. Since $T'\in[0,T)$ is arbitrary, we complete the proof.
\end{proof}

\section{Second order estimate}
In this section, we use techniques developed in \cite[Section 5]{CTW} to prove the second order estimate of solution $\varphi$ to (\ref{Parabolic Monge-Ampere Equation}).
\begin{proposition}\label{Second order estimate}
Let $\varphi$ be the solution of (\ref{Parabolic Monge-Ampere Equation}). There exists a constant $C$ depending only on $(M,\omega,J)$, $F$ and $\varphi_{0}$ such that
\begin{equation*}
\sup_{M\times[0,T)}|\nabla^{2}\varphi|_{g}\leq C,
\end{equation*}
where $\nabla$ is the Levi-Civita connection with respect to $g$ and $[0,T)$ is the maximal time interval of solution $\varphi$.
\end{proposition}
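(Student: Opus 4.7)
The plan is to follow the strategy of \cite[Section 5]{CTW}, adapted to the parabolic setting. The crucial point is that, because $J$ is not assumed integrable, $\sqrt{-1}\partial\overline{\partial}\varphi$ does not capture the full real Hessian $\nabla^{2}\varphi$: the $(2,0)$ and $(0,2)$ parts of $\nabla^{2}\varphi$ carry independent information, and a bound on $\operatorname{tr}_{g}\tilde{g}$ alone is \emph{not} sufficient to control $|\nabla^{2}\varphi|_{g}$. Instead I will directly estimate the largest eigenvalue $\lambda_{1}$ of the symmetric endomorphism of $TM$ defined by $\nabla^{2}\varphi$ via $g$, and argue that $\sup_{M\times[0,T)}\lambda_{1}\le C$ implies the desired bound.

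Consider a test function of the form
\[
Q = \log\lambda_{1} + h(|\partial\varphi|_{g}^{2}) - A\tilde{\varphi},
\]
where $A$ is a large uniform constant and $h$ is a smooth increasing function on the bounded interval determined by Proposition 4.1, to be chosen later. Suppose $Q$ attains its maximum on $M\times[0,T']$ at $(x_{0},t_{0})$. Around $(x_{0},t_{0})$ choose a real $g$-orthonormal frame diagonalizing $\nabla^{2}\varphi(x_{0},t_{0})$ together with an adapted unitary $(1,0)$-frame in which $\tilde{g}_{i\overline{j}}$ is diagonal; the standard perturbation of eigenvalues handles the case where $\lambda_{1}$ has multiplicity. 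At $(x_{0},t_{0})$ the maximum principle gives $(L-\partial/\partial t)Q \leq 0$. Differentiating the equation (\ref{Parabolic Monge-Ampere Equation}) twice along the $\lambda_{1}$-eigendirection produces a lower bound on $(L-\partial/\partial t)\log\lambda_{1}$ in which the "good" third-order terms $\tilde{g}^{i\overline{i}}\bigl(|e_{i}e_{k}(\varphi)|^{2}+|e_{i}\overline{e}_{k}(\varphi)|^{2}\bigr)/\lambda_{1}$ appear with favourable coefficient, while "bad" terms arise from commutators $[e_{i},\overline{e}_{j}]$ and their derivatives (i.e.\ from the torsion of $\omega$ and the Nijenhuis tensor of $J$), together with a $\partial_{t}$ contribution controlled by Lemma 3.1. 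The term $h(|\partial\varphi|_{g}^{2})$, via the computation already carried out in the proof of Proposition 4.1, supplies both a positive $\sum_{i}\tilde{g}^{i\overline{i}}$ piece and enough "good" $|e_{i}\overline{e}_{k}(\varphi)|^{2}$ mass to absorb, by Cauchy--Schwarz, the cross terms of type $\lambda_{1}\,|\partial\varphi|_{g}\,|e_{i}\overline{e}_{k}(\varphi)|$ produced by the torsion. Finally the $-A\tilde{\varphi}$ term yields a positive $A\bigl(\sum_{i}\tilde{g}^{i\overline{i}}-n\bigr)$ contribution which, for $A$ sufficiently large, dominates all remaining error terms. The lower-order quantities $\tilde{\varphi}$, $|\partial\varphi|_{g}$ and $\partial\varphi/\partial t$ are bounded uniformly by Proposition 3.2, Proposition 4.1 and Lemma 3.1 respectively; inserting these bounds yields $\lambda_{1}(x_{0},t_{0})\le C$, and hence $\sup_{M\times[0,T')}\lambda_{1}\le C$, which gives the claimed $C^{2}$ bound.

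The main obstacle is the systematic control of the extra terms that arise from the non-integrability of $J$. In the integrable Hermitian case the torsion-type errors involve only $\partial\omega$ and are absorbed by a single Cauchy--Schwarz, but here the operator $L$ defined in (\ref{Definition of operator L}) carries the additional $[e_{i},\overline{e}_{j}]^{(0,1)}$ correction, and when one commutes third derivatives to align them with those obtained by differentiating (\ref{Parabolic Monge-Ampere Equation}) twice, one picks up second-derivative terms multiplied by the Nijenhuis tensor and its covariant derivatives. These must be absorbed simultaneously with the usual Yau-type third-order terms, and this is where the perturbation-of-eigenvalues argument and the delicate choice of $h$ and $A$ from \cite[Section 5]{CTW} are essential. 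Once this book-keeping is done, the passage from the elliptic to the parabolic setting is routine, since the only new contribution $-\partial_{t}Q = -\partial_{t}\log\lambda_{1} + A\,\partial_{t}\tilde{\varphi}$ is controlled by Lemma 3.1 together with the well-known formula for the evolution of eigenvalues.
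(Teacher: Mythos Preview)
Your outline follows the right template from \cite{CTW}, but it skips the actual heart of that argument and contains two concrete problems.

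First, the test function. You use the linear barrier $-A\tilde\varphi$, whereas the paper (and \cite{CTW}) uses the exponential $e^{-A(\tilde\varphi-\sup\tilde\varphi)}$. This is not cosmetic: applying $L-\partial_{t}$ to the exponential produces, in addition to the $A\sum_{i}\tilde g^{i\bar i}$ you mention, a strictly positive term $A^{2}e^{-A(\tilde\varphi-\sup\tilde\varphi)}|\partial\varphi|_{\tilde g}^{2}$. That term is exactly what absorbs, in the hardest case, the negative contribution $-C\epsilon A^{2}|\partial\varphi|_{\tilde g}^{2}$ that arises when you use $\partial\hat Q=0$ to rewrite $e_{i}(\lambda_{1})/\lambda_{1}$. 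With a linear barrier there is nothing to cancel this, and the inequality does not close.

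Second, and more importantly, you do not address the main structural difficulty. The ``good'' third-order terms from $(L-\partial_{t})\log\lambda_{1}$ are \emph{not} of the form $\tilde g^{i\bar i}(|e_{i}e_{k}\varphi|^{2}+|e_{i}\bar e_{k}\varphi|^{2})/\lambda_{1}$; those come only from $h(|\partial\varphi|_{g}^{2})$. What one actually gets from $\log\lambda_{1}$ is
\[
(2-\epsilon)\sum_{\alpha>1}\frac{\tilde g^{i\bar i}|e_{i}(\varphi_{V_{1}V_{\alpha}})|^{2}}{\lambda_{1}(\lambda_{1}-\lambda_{\alpha})}
+\frac{\tilde g^{p\bar p}\tilde g^{q\bar q}|V_{1}(\tilde g_{p\bar q})|^{2}}{\lambda_{1}}
-(1+\epsilon)\frac{\tilde g^{i\bar i}|e_{i}(\varphi_{V_{1}V_{1}})|^{2}}{\lambda_{1}^{2}},
\]
and the whole point is that the negative last term involves the \emph{real} Hessian direction $V_{1}V_{1}$, while the positive $|V_{1}(\tilde g_{p\bar q})|^{2}$ term only sees the $(1,1)$-part. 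This mismatch is not a torsion/Nijenhuis issue; it is the core obstruction to bounding the full Hessian, and it is resolved in \cite{CTW} (and in the paper) by a three-case analysis: either the eigenvalues of $\tilde g$ are comparable, or the good $|e_{i}e_{k}\varphi|^{2}$ terms from $h$ are large, or one must split the index set $I=\{i:\tilde g_{i\bar i}\ge A^{3}e^{-2A(\tilde\varphi-\sup\tilde\varphi)}\tilde g_{n\bar n}\}$ and invoke the delicate Lemmas 5.5--5.8 of \cite{CTW} to trade the bad term against the first two. Your sketch treats the torsion commutators as the main enemy and asserts that one Cauchy--Schwarz plus the choice of $h,A$ suffices; that is precisely what does \emph{not} work here, and the case analysis you omit is the substance of the proof.
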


\begin{proof}
Let $\lambda_{1}(\nabla^{2}\varphi)\geq\lambda_{2}(\nabla^{2}\varphi)\geq\cdots\geq\lambda_{2n}(\nabla^{2}\varphi)$ be the eigenvalues of $\nabla^{2}\varphi$. It then follows that $\Delta\varphi=\sum_{\alpha=1}^{2n}\lambda_{\alpha}(\nabla^{2}\varphi)$, where $\Delta$ is the Laplace-Beltrami operator of $(M,g)$. By $\omega+\sqrt{-1}\partial\overline{\partial}\varphi>0$, it is clear that
\begin{equation*}
\Delta^{\mathbb{C}}\varphi=\frac{n\sqrt{-1}\partial\overline{\partial}\varphi\wedge\omega^{n-1}}{\omega^{n}}>-n,
\end{equation*}
where $\Delta^{\mathbb{C}}$ is the canonical Laplacian of $(M,\omega,J)$. Since the difference between $\Delta\varphi$ and $2\Delta^{\mathbb{C}}\varphi$ just contains first order terms of $\varphi$ (see e.g. \cite[Lemma 3.2]{Tosatti}). By Proposition \ref{First order estimate}, we obtain the lower bound of $\Delta\varphi$, i.e.,
\begin{equation*}
\sum_{\alpha=1}^{2n}\lambda_{\alpha}(\nabla^{2}\varphi)=\Delta\varphi\geq-C,
\end{equation*}
for a uniform constant $C$. As a result, we have
\begin{equation}\label{lambda 1 control equation 1}
|\nabla^{2}\varphi|_{g}\leq C\max\left(\lambda_{1}(\nabla^{2}\varphi),0\right)+C.
\end{equation}
Hence, in order to get the upper bound of $|\nabla^{2}\varphi|_{g}$, it suffices to prove $\lambda_{1}(\nabla^{2}\varphi)$ is bounded from above. Now, on the set $\{(x,t)\in M\times[0,T)~|~\lambda_{1}(\nabla^{2}\varphi)(x,t)>0\}$ (if this set is empty, then we obtain the upper bound of $\lambda_{1}(\nabla^{2}\varphi)$ directly), we consider the following quantity
\begin{equation*}
Q=\log\lambda_{1}(\nabla^{2}\varphi)+h(|\partial\varphi|_{g}^{2})+e^{-A(\tilde{\varphi}-\sup_{M\times[0,T)}\tilde{\varphi})},
\end{equation*}
where
\begin{equation*}
h(s)=-\frac{1}{2}\log\left(\sup_{M\times[0,T)}|\partial\varphi|_{g}^{2}-s+1\right), \quad \tilde{\varphi}=\varphi-\int_{M}\varphi~\omega^{n}
\end{equation*}
and $A$ is a very large uniform constant to be determined later. By direct calculation and Proposition \ref{First order estimate}, it is clear that
\begin{equation}\label{Properties of h}
h''-2(h')^{2}=0 \quad \text{and} \quad C^{-1}\leq h' \leq C,
\end{equation}
for a uniform constant $C$. For any $T'\in(0,T)$, we assume that $Q$ achieves its maximum at $(x_{0},t_{0})$ on $\{(x,t)\in M\times[0,T']~|~\lambda_{1}(\nabla^{2}\varphi)(x,t)>0\}$. Now, around $x_{0}\in M$, we can find a local unitary frame $\{e_{i}\}_{i=1}^{n}$ (with respect to $g$) for $T_{\mathbb{C}}^{(1,0)}M$ such that, at $(x_{0},t_{0})$, $g_{i\overline{j}}=\delta_{ij}$, $\tilde{g}_{i\overline{j}}=\delta_{ij}\tilde{g}_{i\overline{i}}$ and
\begin{equation*}
\tilde{g}_{1\overline{1}}\geq\tilde{g}_{2\overline{2}}\geq\cdots\geq\tilde{g}_{n\overline{n}},
\end{equation*}
where $\tilde{g}=\tilde{g}(\cdot,t_{0})$. Since $g$ is a Riemannian metric, there exists a normal coordinate system $\{x^{\alpha}\}_{\alpha=1}^{2n}$ centered at $x_{0}$. Note that $g$ and $J$ are compatible, after a linear change of coordinates, at $x_{0}$, we can assume
\begin{equation*}
e_{1}=\frac{1}{\sqrt{2}}(\partial_{1}-\sqrt{-1}\partial_{2}),e_{2}=\frac{1}{\sqrt{2}}(\partial_{3}-\sqrt{-1}\partial_{4}),\cdots,e_{n}=\frac{1}{\sqrt{2}}(\partial_{2n-1}-\sqrt{-1}\partial_{2n})
\end{equation*}
and
\begin{equation}\label{normal coordinate}
\frac{\partial g_{\alpha\beta}}{\partial x^{\gamma}}=\frac{\partial g(\partial_{\alpha},\partial_{\beta})}{\partial x^{\gamma}}=0 \quad \text{for any $\alpha,\beta,\gamma=1,2,\cdots,2n$.}
\end{equation}
Let $V_{\beta}$ be the $g$-unit eigenvector of $\lambda_{\beta}(\nabla^{2}\varphi)(x_{0},t_{0})$ for $\beta=1,2,\cdots,2n$. Next, we extend the eigenvectors $V_{\beta}$ to be vector fields around $x_{0}$ as follows. For any point $x$ near $x_{0}$, we define
\begin{equation*}
V_{\beta}(x)=V_{\beta}^{\alpha}\partial_{\alpha}(x) \quad \text{for $\alpha,\beta=1,2,\cdots,2n$.}
\end{equation*}
Now, we want to use the maximum principle to the quantity $Q$ at $(x_{0},t_{0})$. However, $Q$ may be not smooth at $(x_{0},t_{0})$. In order to deal with this problem, we use a perturbation argument as in \cite{Szekelyhidi,STW}. In the coordinate system $\{x^{\alpha}\}_{\alpha=1}^{2n}$, we define
\begin{equation*}
\begin{split}
B & = B_{\alpha\beta}dx^{\alpha}\otimes dx^{\beta}\\
&= (\delta_{\alpha\beta}-V_{1}^{\alpha}V_{1}^{\beta})dx^{\alpha}\otimes dx^{\beta}.
\end{split}
\end{equation*}
Note that $V_{1}^{\alpha}$ and $V_{1}^{\beta}$ are constants. Next, we define
\begin{equation}\label{Definition of Phi}
\begin{split}
\Phi & = \Phi_{\beta}^{\alpha}\frac{\partial}{\partial x^{\alpha}}\otimes dx^{\beta}\\
& = \left(g^{\alpha\gamma}\varphi_{\gamma\beta}-g^{\alpha\gamma}B_{\gamma\beta}\right)\frac{\partial}{\partial x^{\alpha}}\otimes dx^{\beta},
\end{split}
\end{equation}
where $\varphi_{\gamma\beta}=\nabla^{2}\varphi(\partial_{\gamma},\partial_{\beta})$. Let $\lambda_{1}(\nabla^{2}\Phi)\geq\lambda_{2}(\nabla^{2}\Phi)\geq\cdots\geq\lambda_{2n}(\nabla^{2}\Phi)$ be the eigenvalues of $\Phi$. It then follows that $\lambda_{1}(\Phi)=\lambda_{1}(\nabla^{2}\varphi)$ at $(x_{0},t_{0})$ and $\lambda_{1}(\Phi)\leq\lambda_{1}(\nabla^{2}\varphi)$ near $(x_{0},t_{0})$. Most importantly, at $(x_{0},t_{0})$, the eigenspace of $\Phi$ corresponding to $\lambda_{1}(\Phi)$ has dimension 1, which implies $\lambda_{1}(\Phi)$ is smooth near $(x_{0},t_{0})$. Hence, we consider the perturbed quantity $\hat{Q}$ defined by
\begin{equation*}
\hat{Q}=\log\lambda_{1}(\Phi)+h(|\partial\varphi|_{g}^{2})+e^{-A(\tilde{\varphi}-\sup_{M\times[0,T)}\tilde{\varphi})}.
\end{equation*}
It is clear that $(x_{0},t_{0})$ is still a local maximum point of $\hat{Q}$. And for $\alpha=1,2,\cdots,2n$, $V_{\alpha}$ are still the eigenvectors of $\lambda_{\alpha}(\Phi)$ at $(x_{0},t_{0})$. By Proposition \ref{Oscillation estimate} and Proposition \ref{First order estimate}, in order to prove Proposition \ref{Second order estimate}, we just need to get the upper bound of $\lambda_{1}$ at $(x_{0},t_{0})$. In what follows, we use $\lambda_{\alpha}$ to denote $\lambda_{\alpha}(\Phi)$ for convenience. And we always use $C$ to denote a uniform constant, which may differ from line to line.

Without loss of generality, we assume $\lambda_{1}$ is very large at $(x_{0},t_{0})$. Thus, by (\ref{lambda 1 control equation 1}), we have
\begin{equation}\label{lambda 1 control equation 2}
|\nabla^{2}\varphi|_{g}(x_{0},t_{0})\leq C\lambda_{1}(x_{0},t_{0}).
\end{equation}
First, we have the following lemmas.
\begin{lemma}\label{second order computation}
At $(x_{0},t_{0})$, we have
\begin{equation}\label{second order computation equation 1}
\begin{split}
\left(L-\frac{\partial}{\partial t}\right)\lambda_{1} \geq &~~ 2\sum_{\alpha>1}\frac{\tilde{g}^{i\overline{i}}|e_{i}(\varphi_{V_{1}V_{\alpha}})|^{2}}{\lambda_{1}-\lambda_{\alpha}}+\tilde{g}^{p\overline{p}}\tilde{g}^{q\overline{q}}|V_{1}(\tilde{g}_{p\overline{q}})|^{2}\\
&~~ -2\tilde{g}^{i\overline{i}}[V_{1},e_{i}]V_{1}\overline{e}_{i}(\varphi)-2\tilde{g}^{i\overline{i}}[V_{1},\overline{e}_{i}]V_{1}e_{i}(\varphi)-C\lambda_{1}\sum_{i}\tilde{g}^{i\overline{i}},
\end{split}
\end{equation}
where $\varphi_{V_{\alpha}V_{\beta}}=\nabla^{2}\varphi(V_{\alpha},V_{\beta})$ for $\alpha,\beta=1,2,\cdots,2n$.
\end{lemma}

\begin{proof}
By (\ref{normal coordinate}), (\ref{Definition of Phi}) and the formula for the derivatives of $\lambda_{1}$ (see e.g. \cite[Lemma 5.2]{CTW}), at $(x_{0},t_{0})$, we compute
\begin{equation}\label{second order computation equation 2}
\begin{split}
&~~\left(L-\frac{\partial}{\partial t}\right)\lambda_{1}\\
=&~~\tilde{g}^{i\overline{i}}\sum_{\mu>1}\frac{V_{1}^{\alpha}V_{\mu}^{\beta}V_{\mu}^{\gamma}V_{1}^{\delta}+V_{\mu}^{\alpha}V_{1}^{\beta}V_{1}^{\gamma}V_{\mu}^{\delta}}{\lambda_{1}-\lambda_{\mu}}
e_{i}(\Phi_{\delta}^{\gamma})\overline{e}_{i}(\Phi_{\beta}^{\alpha})+\tilde{g}^{i\overline{i}}V_{1}^{\alpha}V_{1}^{\beta}e_{i}\overline{e}_{i}(\Phi_{\beta}^{\alpha})\\
&~~-\tilde{g}^{i\overline{i}}V_{1}^{\alpha}V_{1}^{\beta}[e_{i},\overline{e}_{i}]^{(0,1)}(\Phi_{\beta}^{\alpha})
-V_{1}^{\alpha}V_{1}^{\beta}\frac{\partial}{\partial t}\Phi_{\beta}^{\alpha}\\
= &~~2\sum_{\alpha>1}\frac{\tilde{g}^{i\overline{i}}|e_{i}(\varphi_{V_{1}V_{\alpha}})|^{2}}{\lambda_{1}-\lambda_{\alpha}}
+\tilde{g}^{i\overline{i}}e_{i}\overline{e}_{i}(\varphi_{V_{1}V_{1}})-\tilde{g}^{i\overline{i}}[e_{i},\overline{e}_{i}]^{(0,1)}(\varphi_{V_{1}V_{1}})\\
&~~ +\tilde{g}^{i\overline{i}}V_{1}^{\alpha}V_{1}^{\beta}\varphi_{\gamma\beta}e_{i}\overline{e}_{i}(g^{\alpha\gamma})
-\tilde{g}^{i\overline{i}}V_{1}^{\alpha}V_{1}^{\beta}B_{\gamma\beta}e_{i}\overline{e}_{i}(g^{\alpha\gamma})-\nabla^{2}\left(\frac{\partial\varphi}{\partial t}\right)(V_{1},V_{1})\\
\geq &~~ 2\sum_{\alpha>1}\frac{\tilde{g}^{i\overline{i}}|e_{i}(\varphi_{V_{1}V_{\alpha}})|^{2}}{\lambda_{1}-\lambda_{\alpha}}
+\tilde{g}^{i\overline{i}}e_{i}\overline{e}_{i}(\varphi_{V_{1}V_{1}})-\tilde{g}^{i\overline{i}}[e_{i},\overline{e}_{i}]^{(0,1)}(\varphi_{V_{1}V_{1}})\\
&~~ -C\lambda_{1}\sum_{i}\tilde{g}^{i\overline{i}}-\nabla^{2}\left(\frac{\partial\varphi}{\partial t}\right)(V_{1},V_{1}),
\end{split}
\end{equation}
where we used (\ref{lambda 1 control equation 2}) and $\lambda_{1}>>1$ at $(x_{0},t_{0})$ for the last inequality. By direct calculation, we have
\begin{equation}\label{second order computation equation 3}
\begin{split}
&~~\tilde{g}^{i\overline{i}}e_{i}\overline{e}_{i}(\varphi_{V_{1}V_{1}})-\tilde{g}^{i\overline{i}}[e_{i},\overline{e}_{i}]^{(0,1)}(\varphi_{V_{1}V_{1}})\\
= &~~ \tilde{g}^{i\overline{i}}e_{i}\overline{e}_{i}\left(V_{1}V_{1}(\varphi)-(\nabla_{V_{1}}V_{1})\varphi\right)-\tilde{g}^{i\overline{i}}[e_{i},\overline{e}_{i}]^{(0,1)}\left(V_{1}V_{1}(\varphi)-(\nabla_{V_{1}}V_{1})\varphi\right)\\
\geq &~~ \tilde{g}^{i\overline{i}}V_{1}V_{1}\left(e_{i}\overline{e}_{i}(\varphi)-[e_{i},\overline{e}_{i}]^{(0,1)}(\varphi)\right)-2\tilde{g}^{i\overline{i}}[V_{1},e_{i}]V_{1}\overline{e}_{i}(\varphi)\\
&~~ -2\tilde{g}^{i\overline{i}}[V_{1},\overline{e}_{i}]V_{1}e_{i}(\varphi)-\tilde{g}^{i\overline{i}}(\nabla_{V_{1}}V_{1})e_{i}\overline{e}_{i}(\varphi)
+\tilde{g}^{i\overline{i}}(\nabla_{V_{1}}V_{1})[e_{i},\overline{e}_{i}]^{(0,1)}(\varphi)-C\lambda_{1}\sum_{i}\tilde{g}^{i\overline{i}}\\
\geq &~~ \tilde{g}^{i\overline{i}}V_{1}V_{1}(\tilde{g}_{i\overline{i}})-2\tilde{g}^{i\overline{i}}[V_{1},e_{i}]V_{1}\overline{e}_{i}(\varphi)-2\tilde{g}^{i\overline{i}}[V_{1},\overline{e}_{i}]V_{1}e_{i}(\varphi)-\tilde{g}^{i\overline{i}}(\nabla_{V_{1}}V_{1})(\tilde{g}_{i\overline{i}})-C\lambda_{1}\sum_{i}\tilde{g}^{i\overline{i}}.
\end{split}
\end{equation}
Applying $\nabla_{V_{1}}V_{1}$ to (\ref{Parabolic Monge-Ampere Equation}), we obtain
\begin{equation}\label{second order computation equation 4}
\tilde{g}^{i\overline{i}}(\nabla_{V_{1}}V_{1})(\tilde{g}_{i\overline{i}})=(\nabla_{V_{1}}V_{1})F+(\nabla_{V_{1}}V_{1})\frac{\partial\varphi}{\partial t}.
\end{equation}
Similarly, applying $V_{1}$ to (\ref{Parabolic Monge-Ampere Equation}) twice, we get
\begin{equation}\label{second order computation equation 5}
\tilde{g}^{i\overline{i}}V_{1}V_{1}(\tilde{g}_{i\overline{i}})=\tilde{g}^{p\overline{p}}\tilde{g}^{q\overline{q}}|V_{1}(\tilde{g}_{p\overline{q}})|^{2}
+V_{1}V_{1}(F)+V_{1}V_{1}\left(\frac{\partial\varphi}{\partial t}\right).
\end{equation}
By (\ref{Parabolic Monge-Ampere Equation}), Lemma \ref{varphi t estimate} and arithmetic-geometry mean inequality, we have $\sum_{i}\tilde{g}^{i\overline{i}}\geq C^{-1}$ for a uniform constant $C$. It then follows that $(\nabla_{V_{1}}V_{1})F$ and $V_{1}V_{1}(F)$ can be bounded by $C\lambda_{1}\sum_{i}\tilde{g}^{i\overline{i}}$. Combining (\ref{second order computation equation 2}), (\ref{second order computation equation 3}), (\ref{second order computation equation 4}) and (\ref{second order computation equation 5}), we prove (\ref{second order computation equation 1}).
\end{proof}

\begin{lemma}\label{zero and first order computation}
At $(x_{0},t_{0})$, we have
\begin{equation}\label{zero and first order computation equation 1}
\left(L-\frac{\partial}{\partial t}\right)|\partial\varphi|_{g}^{2}\geq \frac{1}{2}\sum_{k}\tilde{g}^{i\overline{i}}\left(|e_{i}e_{k}(\varphi)|^{2}+|e_{i}\overline{e}_{k}(\varphi)|^{2}\right)-C\sum_{i}\tilde{g}^{i\overline{i}}-C
\end{equation}
and
\begin{equation}\label{zero and first order computation equation 2}
\left(L-\frac{\partial}{\partial t}\right)\tilde{\varphi}=n-\sum_{i}\tilde{g}^{i\overline{i}}-\frac{\partial\varphi}{\partial t}+\int_{M}\frac{\partial\varphi}{\partial t}~\omega^{n}.
\end{equation}
\end{lemma}

\begin{proof}
By the same calculation in Section 4 (see (\ref{First order estimate equation 7})), for any $\epsilon\in(0,\frac{1}{2}]$, we have
\begin{equation*}
\begin{split}
\left(L-\frac{\partial}{\partial t}\right)|\partial\varphi|_{g}^{2}
\geq &~~ (1-\epsilon)\sum_{k}\tilde{g}^{i\overline{i}}\left(|e_{i}e_{k}(\varphi)|^{2}+|e_{i}\overline{e}_{k}(\varphi)|^{2}\right)\\
&~~ -\frac{C}{\epsilon}|\partial\varphi|_{g}^{2}\sum_{i}\tilde{g}^{i\overline{i}}+2\text{Re}\left(\sum_{k}\varphi_{k}F_{\overline{k}}\right).
\end{split}
\end{equation*}
By taking $\epsilon=\frac{1}{2}$ and Proposition \ref{First order estimate}, we get (\ref{zero and first order computation equation 1}). For (\ref{zero and first order computation equation 2}), we compute
\begin{equation*}
\left(L-\frac{\partial}{\partial t}\right)\tilde{\varphi}=\tilde{g}^{i\overline{i}}(\tilde{g}_{i\overline{i}}-g_{i\overline{i}})-\frac{\partial\tilde{\varphi}}{\partial t}
=n-\sum_{i}\tilde{g}^{i\overline{i}}-\frac{\partial\varphi}{\partial t}+\int_{M}\frac{\partial\varphi}{\partial t}~\omega^{n},
\end{equation*}
as required.
\end{proof}

For convenience, we use $\sup\tilde{\varphi}$ to denote $\sup_{M\times[0,T)}\tilde{\varphi}$ in the following argument.

\begin{lemma}\label{perturbed quantity computation}
At $(x_{0},t_{0})$, for any $\epsilon\in(0,\frac{1}{2}]$, we have
\begin{equation*}
\begin{split}
0\geq &~~(2-\epsilon)\sum_{\alpha>1}\frac{\tilde{g}^{i\overline{i}}|e_{i}(\varphi_{V_{1}V_{\alpha}})|^{2}}{\lambda_{1}(\lambda_{1}-\lambda_{\alpha})}
+\frac{\tilde{g}^{p\overline{p}}\tilde{g}^{q\overline{q}}|V_{1}(\tilde{g}_{p\overline{q}})|^{2}}{\lambda_{1}}\\
&~~ -(1+\epsilon)\frac{\tilde{g}^{i\overline{i}}|e_{i}(\varphi_{V_{1}V_{1}})|^{2}}{\lambda_{1}^{2}}+\frac{h'}{2}\sum_{k}\tilde{g}^{i\overline{i}}\left(|e_{i}e_{k}(\varphi)|^{2}+|e_{i}\overline{e}_{k}(\varphi)|^{2}\right)\\
&~~
+h''\tilde{g}^{i\overline{i}}|e_{i}(|\partial\varphi|_{g}^{2})|^{2}+\left(Ae^{-A(\tilde{\varphi}-\sup\tilde{\varphi})}-\frac{C}{\epsilon}\right)\sum_{i}\tilde{g}^{i\overline{i}}\\
&~~
+A^{2}e^{-A(\tilde{\varphi}-\sup\tilde{\varphi})}|\partial\varphi|_{\tilde{g}}^{2}-ACe^{-A(\tilde{\varphi}-\sup\tilde{\varphi})}.
\end{split}
\end{equation*}
\end{lemma}

\begin{proof}
By the definitions of vector fields $V_{\alpha}$, the components of $V_{\alpha}$ are constant. Hence, at $(x_{0},t_{0})$, we have
\begin{equation*}
\begin{split}
&~~ |[V_{1},e_{i}]V_{1}\overline{e}_{i}(\varphi)|+|[V_{1},\overline{e}_{i}]V_{1}e_{i}(\varphi)|\\
\leq &~~  C\sum_{\alpha=1}^{2n}|V_{\alpha}V_{1}e_{i}(\varphi)|\\
= &~~ C\sum_{\alpha=1}^{2n}|e_{i}V_{\alpha}V_{1}(\varphi)-V_{\alpha}[e_{i},V_{1}](\varphi)-[e_{i},V_{\alpha}]V_{1}(\varphi)|\\
= &~~ C\sum_{\alpha=1}^{2n}|e_{i}(\varphi_{V_{1}V_{\alpha}})+e_{i}(\nabla_{V_{\alpha}}V_{1})(\varphi)-V_{\alpha}[e_{i},V_{1}](\varphi)-[e_{i},V_{\alpha}]V_{1}(\varphi)|,
\end{split}
\end{equation*}
which implies
\begin{equation}\label{perturbed quantity computation equation 1}
\begin{split}
&~~ 2\tilde{g}^{i\overline{i}}[V_{1},e_{i}]V_{1}\overline{e}_{i}(\varphi)+2\tilde{g}^{i\overline{i}}[V_{1},\overline{e}_{i}]V_{1}e_{i}(\varphi)\\
\leq &~~ C\tilde{g}^{i\overline{i}}|e_{i}(\varphi_{V_{1}V_{1}})|+C\sum_{\alpha>1}\tilde{g}^{i\overline{i}}|e_{i}(\varphi_{V_{1}V_{\alpha}})|+C\lambda_{1}\sum_{i}\tilde{g}^{i\overline{i}}\\
\leq &~~
\epsilon\frac{\tilde{g}^{i\overline{i}}|e_{i}(\varphi_{V_{1}V_{1}})|^{2}}{\lambda_{1}}+\left(\frac{C}{\epsilon}+C\right)\lambda_{1}\sum_{i}\tilde{g}^{i\overline{i}}
+\epsilon\sum_{\alpha>1}\frac{\tilde{g}^{i\overline{i}}|e_{i}(\varphi_{V_{1}V_{\alpha}})|^{2}}{\lambda_{1}-\lambda_{\alpha}}
+\frac{C}{\epsilon}\sum_{i}\sum_{\alpha>1}\tilde{g}^{i\overline{i}}(\lambda_{1}-\lambda_{\alpha})\\
\leq &~~
\epsilon\frac{\tilde{g}^{i\overline{i}}|e_{i}(\varphi_{V_{1}V_{1}})|^{2}}{\lambda_{1}}+\epsilon\sum_{\alpha>1}\frac{\tilde{g}^{i\overline{i}}|e_{i}(\varphi_{V_{1}V_{\alpha}})|^{2}}{\lambda_{1}-\lambda_{\alpha}}
+\frac{C\lambda_{1}}{\epsilon}\sum_{i}\tilde{g}^{i\overline{i}},
\end{split}
\end{equation}
where we used (\ref{lambda 1 control equation 2}) in the last line. Combining the maximum principle, Lemma \ref{second order computation}, Lemma \ref{zero and first order computation}, (\ref{Properties of h}) and (\ref{perturbed quantity computation equation 1}), at $(x_{0},t_{0})$, for any $\epsilon\in(0,\frac{1}{2}]$, we obtain
\begin{equation}\label{perturbed quantity computation equation 2}
\begin{split}
0\geq &~~ \left(L-\frac{\partial}{\partial t}\right)\hat{Q}\\
= &~~
\frac{1}{\lambda_{1}}\left(L-\frac{\partial}{\partial t}\right)\lambda_{1}-\frac{\tilde{g}^{i\overline{i}}|e_{i}(\lambda_{1})|^{2}}{\lambda_{1}^{2}}+h'\left(L-\frac{\partial}{\partial t}\right)|\partial\varphi|_{g}^{2}\\
&~~
+h''\tilde{g}^{i\overline{i}}|e_{i}(|\partial\varphi|_{g}^{2})|^{2}-Ae^{-A(\tilde{\varphi}-\sup\tilde{\varphi})}\left(L-\frac{\partial}{\partial t}\right)\tilde{\varphi}+A^{2}e^{-A(\tilde{\varphi}-\sup\tilde{\varphi})}|\partial\tilde{\varphi}|_{\tilde{g}}^{2}\\
\geq &~~
2\sum_{\alpha>1}\frac{\tilde{g}^{i\overline{i}}|e_{i}(\varphi_{V_{1}V_{\alpha}})|^{2}}{\lambda_{1}(\lambda_{1}-\lambda_{\alpha})}
+\frac{\tilde{g}^{p\overline{p}}\tilde{g}^{q\overline{q}}|V_{1}(\tilde{g}_{p\overline{q}})|^{2}}{\lambda_{1}}\\
&~~ -\frac{\tilde{g}^{i\overline{i}}|e_{i}(\varphi_{V_{1}V_{1}})|^{2}}{\lambda_{1}^{2}}-\frac{2}{\lambda_{1}}\left(\tilde{g}^{i\overline{i}}[V_{1},e_{i}]V_{1}\overline{e}_{i}(\varphi)+\tilde{g}^{i\overline{i}}[V_{1},\overline{e}_{i}]V_{1}e_{i}(\varphi)\right)\\
&~~
+\frac{h'}{2}\sum_{k}\tilde{g}^{i\overline{i}}\left(|e_{i}e_{k}(\varphi)|^{2}+|e_{i}\overline{e}_{k}(\varphi)|^{2}\right)-Ch'\\
&~~
+h''\tilde{g}^{i\overline{i}}|e_{i}(|\partial\varphi|_{g}^{2})|^{2}+\left(Ae^{-A(\tilde{\varphi}-\sup\tilde{\varphi})}-C-Ch'\right)\sum_{i}\tilde{g}^{i\overline{i}}\\
&~~
+A^{2}e^{-A(\tilde{\varphi}-\sup\tilde{\varphi})}|\partial\varphi|_{\tilde{g}}^{2}-A\left(n-\frac{\partial\varphi}{\partial t}+\int_{M}\frac{\partial\varphi}{\partial t}~\omega^{n}\right)e^{-A(\tilde{\varphi}-\sup\tilde{\varphi})},
\end{split}
\end{equation}
where we used the first derivative formula of $\lambda_{1}$ (see \cite[Lemma 5.2]{CTW}) and $|\partial\tilde{\varphi}|_{\tilde{g}}^{2}=|\partial\varphi|_{\tilde{g}}^{2}$ for the last inequality. Therefore, Lemma \ref{perturbed quantity computation} follows from Lemma \ref{varphi t estimate}, (\ref{Properties of h}), (\ref{perturbed quantity computation equation 1}), (\ref{perturbed quantity computation equation 2}) and the fact that $\sum_{i}\tilde{g}^{i\overline{i}}$ has a positive uniform lower bound.
\end{proof}

In what follows, for convenience, we use $C_{A}$ to denote the constant depending only on $(M,\omega,J)$, $F$, $\varphi_{0}$ and $A$. In order to prove Proposition \ref{Second order estimate}, we split up into different cases.

\bigskip
\noindent
{\bf Case 1.} At $(x_{0},t_{0})$, we assume that
\begin{equation}\label{case 1 equation 1}
\tilde{g}_{1\overline{1}}<A^{3}e^{-2A(\tilde{\varphi}-\sup\tilde{\varphi})}\tilde{g}_{n\overline{n}}.
\end{equation}
\bigskip

Since $(x_{0},t_{0})$ is the local maximum point of $\hat{Q}$, thus we have $e_{i}(\hat{Q})=0$ at $(x_{0},t_{0})$, which implies
\begin{equation}\label{case 1 equation 2}
\begin{split}
\frac{\tilde{g}^{i\overline{i}}|e_{i}(\varphi_{V_{1}V_{1}})|^{2}}{\lambda_{1}^{2}}
& = \tilde{g}^{i\overline{i}}|Ae^{-A(\tilde{\varphi}-\sup\tilde{\varphi})}e_{i}(\varphi)-h'e_{i}(|\partial\varphi|_{g}^{2})|^{2}\\
& \leq 4\left(\sup_{M\times[0,T)}|\partial\varphi|_{g}^{2}\right)A^{2}e^{-2A(\tilde{\varphi}-\sup\tilde{\varphi})}\sum_{i}\tilde{g}^{i\overline{i}}
+\frac{4}{3}(h')^{2}\tilde{g}^{i\overline{i}}\left|e_{i}(|\partial\varphi|_{g}^{2})\right|^{2}.
\end{split}
\end{equation}
Combining Lemma \ref{perturbed quantity computation} (taking $\epsilon=\frac{1}{2}$) and (\ref{case 1 equation 2}), we obtain
\begin{equation}\label{case 1 equation 3}
\begin{split}
0 \geq &~~ -6\left(\sup_{M\times[0,T)}|\partial\varphi|_{g}^{2}\right)A^{2}e^{-2A(\tilde{\varphi}-\sup\tilde{\varphi})}\sum_{i}\tilde{g}^{i\overline{i}}+\left(h''-2(h')^{2}\right)\tilde{g}^{i\overline{i}}|e_{i}(|\partial\varphi|_{g}^{2})|^{2}\\
&~~ +\frac{h'}{2}\sum_{k}\tilde{g}^{i\overline{i}}\left(|e_{i}e_{k}(\varphi)|^{2}
+|e_{i}\overline{e}_{k}(\varphi)|^{2}\right)+(Ae^{-A(\tilde{\varphi}-\sup\tilde{\varphi})}-C)\sum_{i}\tilde{g}^{i\overline{i}}-ACe^{-A(\tilde{\varphi}-\sup\tilde{\varphi})}.
\end{split}
\end{equation}
Using (\ref{Parabolic Monge-Ampere Equation}), (\ref{case 1 equation 1}) and Proposition \ref{Oscillation estimate}, at $(x_{0},t_{0})$, it is clear that
\begin{equation}\label{case 1 equation 4}
C_{A}\geq\tilde{g}_{1\overline{1}}\geq\tilde{g}_{2\overline{2}}\geq\cdots\geq\tilde{g}_{n\overline{n}}\geq C_{A}^{-1}.
\end{equation}
Plugging (\ref{Properties of h}) and (\ref{case 1 equation 4}) into (\ref{case 1 equation 3}), we obtain
\begin{equation*}
\sum_{i,k}\left(|e_{i}e_{k}(\varphi)|^{2}+|e_{i}\overline{e}_{k}(\varphi)|^{2}\right)\leq C_{A},
\end{equation*}
which implies the upper bound of $\lambda_{1}$ at $(x_{0},t_{0})$. This completes Case 1.

\bigskip
\noindent
{\bf Case 2.} At $(x_{0},t_{0})$, we assume that
\begin{equation}\label{case 2 equation 1}
\frac{h'}{4}\sum_{k}\tilde{g}^{i\overline{i}}\left(|e_{i}e_{k}(\varphi)|^{2}+|e_{i}\overline{e}_{k}(\varphi)|^{2}\right)
>6\left(\sup_{M\times[0,T)}|\partial\varphi|_{g}^{2}\right)A^{2}e^{-2A(\tilde{\varphi}-\sup\tilde{\varphi})}\sum_{i}\tilde{g}^{i\overline{i}}.
\end{equation}
\bigskip

By the same argument in Case 1, we still have (\ref{case 1 equation 3}). Combining (\ref{Properties of h}), (\ref{case 1 equation 3}), (\ref{case 2 equation 1}) and Proposition \ref{Oscillation estimate}, at $(x_{0},t_{0})$, we get
\begin{equation*}
0\geq C_{1}^{-1}\sum_{k}\tilde{g}^{i\overline{i}}\left(|e_{i}e_{k}(\varphi)|^{2}+|e_{i}\overline{e}_{k}(\varphi)|^{2}\right)
+(A-C_{1})\sum_{i}\tilde{g}^{i\overline{i}}-C_{1}Ae^{C_{1}A},
\end{equation*}
for a uniform constant $C_{1}$. We can choose $A$ sufficiently large such that $A>C_{1}+1$, then we can get the upper bound of $\sum_{i}\tilde{g}^{i\overline{i}}$. Thus, by (\ref{Parabolic Monge-Ampere Equation}), we obtain the positive lower and upper bound of $\tilde{g}_{i\overline{i}}$ for every $i=1,2,\cdots,n$. Hence, by the similar argument in Case 1, we get the upper bound of $\lambda_{1}$ at $(x_{0},t_{0})$, which completes Case 2.

\bigskip
\noindent
{\bf Case 3.} Neither Case 1 nor Case 2 happen.
\bigskip

In this case, we need to deal with the following terms
\begin{equation*}
(2-\epsilon)\sum_{\alpha>1}\frac{\tilde{g}^{i\overline{i}}|e_{i}(\varphi_{V_{1}V_{\alpha}})|^{2}}{\lambda_{1}(\lambda_{1}-\lambda_{\alpha})}
+\frac{\tilde{g}^{p\overline{p}}\tilde{g}^{q\overline{q}}|V_{1}(\tilde{g}_{p\overline{q}})|^{2}}{\lambda_{1}}
-(1+\epsilon)\frac{\tilde{g}^{i\overline{i}}|e_{i}(\varphi_{V_{1}V_{1}})|^{2}}{\lambda_{1}^{2}}
\end{equation*}
in Lemma \ref{perturbed quantity computation}. In order to do this, we define
\begin{equation*}
I=\{i~|~\tilde{g}_{i\overline{i}}\geq A^{3}e^{-2A(\tilde{\varphi}-\sup\tilde{\varphi})}\tilde{g}_{n\overline{n}} \text{~~at $(x_{0},t_{0})$}\}.
\end{equation*}
Since Case 1 does not happen and $A>1$, we have $1\in I$ and $n\notin I$. Without loss of generality, we can assume $I=\{1,2,\cdots,j\}$. By the similar argument of \cite[Lemma 5.5]{CTW}, we obtain

\begin{lemma}\label{case 3 lemma 1}
At $(x_{0},t_{0})$, for any $\epsilon\in(0,\frac{1}{2}]$, we have
\begin{equation*}
-(1+\epsilon)\sum_{i\in I}\frac{\tilde{g}^{i\overline{i}}|e_{i}(\varphi_{V_{1}V_{1}})|^{2}}{\lambda_{1}^{2}}
\geq -\sum_{i}\tilde{g}^{i\overline{i}}-2(h')^{2}\sum_{i\in I}\tilde{g}^{i\overline{i}}\left|e_{i}(|\partial\varphi|_{g}^{2})\right|^{2}.
\end{equation*}
\end{lemma}

Without loss of generality, we can assume $\lambda_{1}\geq\frac{C_{A}}{\epsilon^{3}}$ at $(x_{0},t_{0})$ ($A$ and $\epsilon$ will be chosen uniformly at last). Using the similar arguments of \cite[Lemma 5.6, Lemma 5.7, Lemma 5.8]{CTW}, we have the following estimate

\begin{lemma}\label{case 3 lemma 2}
At $(x_{0},t_{0})$, for any $\epsilon\in(0,\frac{1}{6})$, we have
\begin{equation*}
\begin{split}
&~~(2-\epsilon)\sum_{\alpha>1}\frac{\tilde{g}^{i\overline{i}}|e_{i}(\varphi_{V_{1}V_{\alpha}})|^{2}}{\lambda_{1}(\lambda_{1}-\lambda_{\alpha})}
+\frac{\tilde{g}^{p\overline{p}}\tilde{g}^{q\overline{q}}|V_{1}(\tilde{g}_{p\overline{q}})|^{2}}{\lambda_{1}}
-(1+\epsilon)\sum_{i\notin I}\frac{\tilde{g}^{i\overline{i}}|e_{i}(\varphi_{V_{1}V_{1}})|^{2}}{\lambda_{1}^{2}}\\
\geq &~~ -3\epsilon\sum_{i\notin I}\frac{\tilde{g}^{i\overline{i}}|e_{i}(\varphi_{V_{1}V_{1}})|^{2}}{\lambda_{1}^{2}}-\frac{C}{\epsilon}\sum_{i}\tilde{g}^{i\overline{i}}.
\end{split}
\end{equation*}
\end{lemma}

Now, we prove Proposition \ref{Second order estimate} for Case 3. Combining $\partial\hat{Q}=0$ at $(x_{0},t_{0})$ and the Cauchy-Schwarz inequality, for any $\epsilon\in(0,\frac{1}{6})$, we have
\begin{equation}\label{case 3 equation 1}
\begin{split}
-3\epsilon\sum_{i\notin I}\frac{\tilde{g}^{i\overline{i}}|e_{i}(\varphi_{V_{1}V_{1}})|^{2}}{\lambda_{1}^{2}}
= &~~ -3\epsilon\sum_{i\notin I}\tilde{g}^{i\overline{i}}|Ae^{-A(\tilde{\varphi}-\sup\tilde{\varphi})}e_{i}(\varphi)-h'e_{i}(|\partial\varphi|_{g}^{2})|^{2}\\
\geq &~~ -6\epsilon A^{2}e^{-2A(\tilde{\varphi}-\sup\tilde{\varphi})}|\partial\varphi|_{\tilde{g}}^{2}
-6\epsilon(h')^{2}\sum_{i\notin I}\tilde{g}^{i\overline{i}}|e_{i}(|\partial\varphi|_{g}^{2})|^{2}\\
\geq &~~ -6\epsilon A^{2}e^{-2A(\tilde{\varphi}-\sup\tilde{\varphi})}|\partial\varphi|_{\tilde{g}}^{2}
-2(h')^{2}\sum_{i\notin I}\tilde{g}^{i\overline{i}}|e_{i}(|\partial\varphi|_{g}^{2})|^{2}
\end{split}
\end{equation}
Combining (\ref{Properties of h}), (\ref{case 3 equation 1}), Lemma \ref{perturbed quantity computation}, Lemma \ref{case 3 lemma 1} and Lemma \ref{case 3 lemma 2}, we obtain
\begin{equation}\label{case 3 equation 2}
\begin{split}
0 \geq &~~ \left(Ae^{-A(\tilde{\varphi}-\sup\tilde{\varphi})}-\frac{C_{2}}{\epsilon}\right)\sum_{i}\tilde{g}^{i\overline{i}}
+\frac{h'}{2}\sum_{k}\tilde{g}^{i\overline{i}}\left(|e_{i}e_{k}(\varphi)|^{2}+|e_{i}\overline{e}_{k}(\varphi)|^{2}\right)\\
&~~ +\left(A^{2}e^{-A(\tilde{\varphi}-\sup\tilde{\varphi})}-6\epsilon A^{2}e^{-2A(\tilde{\varphi}-\sup\tilde{\varphi})}\right)|\partial\varphi|_{\tilde{g}}^{2}-AC_{2}e^{-A(\tilde{\varphi}-\sup\tilde{\varphi})},
\end{split}
\end{equation}
where $C_{2}$ is a uniform constant. Now, we choose
\begin{equation*}
A=6C_{2}+1 \text{~~and~~} \epsilon=\frac{1}{6}e^{A(\tilde{\varphi}(x_{0},t_{0})-\sup\tilde{\varphi})}\in(0,\frac{1}{6}).
\end{equation*}
Thus, at $(x_{0},t_{0})$, by Proposition \ref{Oscillation estimate}, (\ref{Properties of h}) and (\ref{case 3 equation 2}), it is clear that
\begin{equation*}
\sum_{i}\tilde{g}^{i\overline{i}}+\sum_{k}\tilde{g}^{i\overline{i}}\left(|e_{i}e_{k}(\varphi)|^{2}
+|e_{i}\overline{e}_{k}(\varphi)|^{2}\right)\leq C,
\end{equation*}
for a uniform constant $C$. By the similar argument in Case 1, we get the upper bound of $\lambda_{1}$ at $(x_{0},t_{0})$, which completes Case 3. Hence, we complete the proof of Proposition \ref{Second order estimate}.
\end{proof}

\section{Proof of (1) in Theorem \ref{Main Theorem}}
In this section, we give the proof of (1) in Theorem \ref{Main Theorem}. First, we need the following estimate.

\begin{lemma}\label{Proof of Main Theorem lemma 1}
Let $\varphi$ be the solution of (\ref{Parabolic Monge-Ampere Equation}) and $[0,T)$ be the maximal time interval. For any $\epsilon\in(0,T)$ and positive integer $k\geq1$, there exists a constant $C(\epsilon,k)$ depending only on $(M,\omega,J)$, $F$, $\varphi_{0}$, $\epsilon$ and $k$ such that
\begin{equation}\label{Proof of Main Theorem lemma 1 equation 1}
\sup_{M\times[\epsilon,T)}|\nabla^{k}\varphi(x,t)|\leq C(\epsilon,k).
\end{equation}
\end{lemma}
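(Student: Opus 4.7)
The plan is to combine the zeroth, first, and second order estimates (Propositions \ref{Oscillation estimate}, \ref{First order estimate}, \ref{Second order estimate}) with standard parabolic regularity theory, namely an Evans--Krylov type $C^{2,\alpha}$ estimate followed by linear Schauder bootstrapping.

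First, I would note that Propositions \ref{Oscillation estimate}--\ref{Second order estimate} and Lemma \ref{varphi t estimate} together yield a uniform constant $\Lambda>1$ with
\[
\Lambda^{-1} g \leq \tilde g \leq \Lambda g \qquad \text{on } M\times[0,T),
\]
since a uniform bound on $|\nabla^2\varphi|_g$ together with $\omega+\sqrt{-1}\partial\overline\partial\varphi>0$ gives an upper bound on $\tilde g$, and Lemma \ref{varphi t estimate} controls $\log(\tilde\omega^n/\omega^n)$ and hence $\det\tilde g$ from below, which combined with the upper bound forces a positive lower bound. Therefore the linearized operator $L-\partial_t$ from \eqref{Definition of operator L} is uniformly parabolic on all of $M\times[0,T)$.

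Second, working in local coordinates I would rewrite \eqref{Parabolic Monge-Ampere Equation} as $\partial_t\varphi=\mathcal F(x,\nabla^2\varphi,\nabla\varphi,\varphi)$ where $\mathcal F$ is smooth, uniformly parabolic by the previous step, and concave in the second order argument, since $\log\det$ is concave on positive Hermitian matrices and the bracket contributions $[e_i,\overline e_j]^{(0,1)}$ are smooth lower order perturbations that do not affect the principal part. Applying parabolic Evans--Krylov theory then gives, for every $\epsilon\in(0,T)$ and some $\alpha\in(0,1)$,
\[
\|\varphi\|_{C^{2,\alpha}(M\times[\epsilon/2,T))} \leq C(\epsilon),
\]
with $C(\epsilon)$ depending only on $(M,\omega,J)$, $F$, $\varphi_0$, and $\epsilon$; the need for $\epsilon>0$ is due to the interior-in-time nature of this estimate.

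Third, I would bootstrap by differentiation. Any spatial derivative $w$ of $\varphi$ satisfies a linear parabolic equation $(L-\partial_t)w=R$, where $R$ is a smooth expression in $\varphi,\nabla\varphi,F$, and whose coefficients now lie in $C^{\alpha}$ by the previous step. Linear parabolic Schauder estimates then upgrade $w$ to $C^{2,\alpha}$ on $M\times[\epsilon,T)$, i.e., $\varphi\in C^{3,\alpha}$ there. Iterating this procedure (differentiating once more at each stage, applying Schauder, and shrinking the time interval by an arbitrarily small amount) produces bounds on $\nabla^k\varphi$ for all $k\geq 1$, which is \eqref{Proof of Main Theorem lemma 1 equation 1}.

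The main obstacle is the Evans--Krylov step on a non-integrable almost Hermitian manifold: the bracket terms $[e_i,\overline e_j]^{(0,1)}$ arising from the failure of $J$ to be integrable must be absorbed as smooth lower order terms so that the concavity of the principal part is preserved and the standard parabolic Evans--Krylov argument applies. Once this structural point is handled (in analogy with the elliptic case treated in \cite{CTW}), the remainder is routine parabolic regularity.
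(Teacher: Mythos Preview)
Your proposal is correct and follows the same strategy as the paper: use Propositions \ref{First order estimate} and \ref{Second order estimate} to make the equation uniformly parabolic and concave, obtain a $C^{2,\alpha}$ (Evans--Krylov type) estimate for $\sqrt{-1}\partial\overline\partial\varphi$, and then bootstrap via Schauder. The only differences are presentational: the paper cites \cite{Chu16} (Theorem~5.1 there) for the parabolic $C^{2,\alpha}$ estimate in the almost Hermitian setting rather than invoking generic Evans--Krylov, and it makes the uniformity in $T$ explicit via a time-translation trick (applying the estimate to $\varphi_b(x,t)=\varphi(x,t+b)-\inf_{M\times[b,b+1)}\varphi$ on the fixed interval $[0,1)$ for each $b$), which is precisely what your phrase ``interior-in-time nature of this estimate'' encodes.
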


\begin{proof}
Combining Proposition \ref{First order estimate} and Proposition \ref{Second order estimate}, it is clear that (\ref{Parabolic Monge-Ampere Equation}) is uniformly parabolic. By the Schauder estimate (see e.g. \cite{Lieberman}) and bootstrapping method, in order to prove (\ref{Proof of Main Theorem lemma 1 equation 1}), it suffices to prove the H\"{o}lder estimate of $\sqrt{-1}\partial\overline{\partial}\varphi$. We split up into different cases.

\bigskip
\noindent
{\bf Case 1.} $T<1$.
\bigskip

In this case, by Lemma \ref{varphi t estimate}, we have
\begin{equation*}
\sup_{M\times[0,T)}|\varphi(x,t)|\leq CT+C \leq C,
\end{equation*}
for a uniform constant $C$. By Theorem 5.1 in \cite{Chu16}, we obtain (\ref{Proof of Main Theorem lemma 1 equation 1}).

\bigskip
\noindent
{\bf Case 2.} $T\geq1$.
\bigskip

In this case, for any $b\in(0,T-1)$, we define
\begin{equation*}
\varphi_{b}(x,t)=\varphi(x,t+b)-\inf_{M\times[b,b+1)}\varphi(x,t)
\end{equation*}
for all $t\in[0,1)$. Combining Lemma \ref{varphi t estimate} and Proposition \ref{Oscillation estimate}, we have
\begin{equation*}
\sup_{M\times[0,1)}|\varphi_{b}(x,t)|\leq C(b+1-1)+C \leq C,
\end{equation*}
for a uniform constant $C$. It is clear that
\begin{equation*}
\frac{\partial\varphi_{b}}{\partial t}=\log\frac{(\omega+\sqrt{-1}\partial\overline{\partial}\varphi_{b})^{n}}{\omega^{n}}-F
\end{equation*}
for any $(x,t)\in M\times[0,1)$. By Theorem 5.1 in \cite{Chu16}, for any $\epsilon\in(0,\frac{1}{2})$ and $\alpha\in(0,1)$, we have
\begin{equation*}
[\sqrt{-1}\partial\overline{\partial}\varphi]_{C^{\alpha}(M\times[b+\epsilon,b+1))}
=[\sqrt{-1}\partial\overline{\partial}\varphi_{b}]_{C^{\alpha}(M\times[\epsilon,1))}
\leq C(\epsilon,\alpha),
\end{equation*}
where $C(\epsilon,\alpha)$ is a constant depending only on $(M,\omega,J)$, $F$, $\varphi_{0}$, $\epsilon$ and $\alpha$. Since $b\in(0,T-1)$ is arbitrary, it is clear that
\begin{equation*}
[\sqrt{-1}\partial\overline{\partial}\varphi]_{C^{\alpha}(M\times[\epsilon,T))}\leq C(\epsilon,\alpha),
\end{equation*}
as required.
\end{proof}

Now, we are in the position to prove (1) in Theorem \ref{Main Theorem}.

\begin{proof}[Proof of (1) in Theorem \ref{Main Theorem}]
First, the uniqueness of solution follows from the standard parabolic theory. Next, to prove $T=\infty$, we argue by contradiction. If $T<\infty$, by Lemma \ref{varphi t estimate}, we have
\begin{equation}\label{Proof of Main Theorem (1) equation 1}
\begin{split}
\sup_{M\times[0,T)}|\varphi(x,t)| & \leq \sup_{M}|\varphi_{0}(x)|+T\sup_{M\times[0,T)}\left|\frac{\partial\varphi}{\partial t}(x,t)\right|\\
& \leq C(T+1),
\end{split}
\end{equation}
for a uniform constant $C$. Combining (\ref{Proof of Main Theorem (1) equation 1}), Lemma \ref{Proof of Main Theorem lemma 1} and short time existence, we can extend the solution $\varphi$ to $[0,T_{0})$ ($T_{0}>T$), which is a contradiction.
\end{proof}

\section{The Harnack inequality}
In this section, we consider the following parabolic equation
\begin{equation}\label{Harnack parabolic equation}
\frac{\partial}{\partial t}u=Lu,
\end{equation}
where $L$ is defined by (\ref{Definition of operator L}) and $\varphi$ is the solution of (\ref{Parabolic Monge-Ampere Equation}). Let $u$ be a positive solution of (\ref{Harnack parabolic equation}). We prove the Harnack inequality on almost Hermitian manifold $(M,\omega,J)$ (see \cite{Weinkove04} for the K\"{a}hler case and see \cite{Gill} for the Hermitian case), which is the generalized version of Theorem 2.2 in \cite{LY}. Our argument is similar to \cite{LY}, which is a little different from the arguments given in \cite{Weinkove04} and \cite{Gill}. First, we have the following lemmas.

\begin{lemma}\label{Uniform Estimate}
Let $\varphi$ be the solution of (\ref{Parabolic Monge-Ampere Equation}). For any positive integer $k\geq1$, there exists a constant $C_{k}$ depending only on $(M,\omega,J)$, $F$, $\varphi_{0}$ and $k$ such that
\begin{equation*}
\sup_{M\times[0,\infty)}|\nabla^{k}\varphi(x,t)|\leq C_{k}.
\end{equation*}
\end{lemma}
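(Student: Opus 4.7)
The plan is to essentially replay the Case 2 argument of Lemma \ref{Proof of Main Theorem lemma 1}, but now exploiting the fact that from Theorem \ref{Main Theorem}(1) we already know $T=\infty$, and that the constants produced by the time-translation trick are genuinely independent of the translation parameter. The point is that Propositions \ref{Oscillation estimate}, \ref{First order estimate}, and \ref{Second order estimate} already give uniform bounds in $t$ on $|\tilde\varphi|$, $|\partial\varphi|_g$, and $|\nabla^2\varphi|_g$, so the equation \eqref{Parabolic Monge-Ampere Equation} is uniformly parabolic in a time-independent sense, and Lemma \ref{varphi t estimate} gives a time-independent bound on $\varphi_t$.

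First I would reduce to the case $t\geq 1$, because for $t\in[0,1]$ the bound $\sup_M|\varphi|\leq \sup_M|\varphi_0|+\|\varphi_t\|_{L^\infty}$ together with Lemma \ref{Proof of Main Theorem lemma 1} (applied with $T>1$) already supplies constants $C_k$. For $t\geq 1$, given any $b\in[1,\infty)$, set
\begin{equation*}
\varphi_b(x,s)=\varphi(x,s+b-1)-\inf_{M\times[b-1,b+1)}\varphi(x,t),\qquad s\in[0,2).
\end{equation*}
By Lemma \ref{varphi t estimate} and Proposition \ref{Oscillation estimate}, $\sup_{M\times[0,2)}|\varphi_b|\leq C$ for a uniform $C$ independent of $b$. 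Since adding a time-dependent constant does not affect $\sqrt{-1}\partial\overline{\partial}\varphi_b$, the function $\varphi_b$ still satisfies $\partial_s\varphi_b=\log\bigl((\omega+\sqrt{-1}\partial\overline{\partial}\varphi_b)^n/\omega^n\bigr)-F$.

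Next I would invoke Theorem 5.1 of \cite{Chu16} (the same Evans--Krylov / Schauder estimate used in Lemma \ref{Proof of Main Theorem lemma 1}) on $M\times[0,2)$. This produces, for each $\alpha\in(0,1)$, a uniform $C^\alpha$ bound on $\sqrt{-1}\partial\overline{\partial}\varphi_b$ on $M\times[1/2,3/2)$, with constant depending only on $(M,\omega,J)$, $F$, $\varphi_0$, and $\alpha$ (in particular independent of $b$). Once $\sqrt{-1}\partial\overline{\partial}\varphi_b$ is Hölder, the equation linearizes with Hölder coefficients, and bootstrapping via standard parabolic Schauder (see e.g.\ \cite{Lieberman}) gives uniform $C^{k,\alpha}$ bounds on $\varphi_b$ on $M\times[1/2,3/2)$ for every $k$. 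Translating back, this yields
\begin{equation*}
\sup_{M\times[b-1/2,b+1/2)}|\nabla^k\varphi(x,t)|\leq C_k,
\end{equation*}
with $C_k$ independent of $b$, because $\nabla^k$ for $k\geq 1$ annihilates the time-dependent constant $\inf_{M\times[b-1,b+1)}\varphi$ used in the definition of $\varphi_b$.

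Finally I would let $b$ range over $[1,\infty)$, whose translates cover $M\times[1,\infty)$, and combine with the short-time estimate on $M\times[0,1]$. The main technical point to verify — and the only place where one must be slightly careful — is that every constant appearing in the Hölder and Schauder step depends only on $(M,\omega,J)$, $F$, $\varphi_0$, and the uniform $C^{1,1}$ bounds from Sections 4--5, and not on the length of the time interval or on $b$; this is precisely the content of Theorem 5.1 of \cite{Chu16}, whose hypotheses (uniform ellipticity and uniform $L^\infty$ bound on the unknown) are furnished here uniformly in $b$. I do not anticipate a genuine obstacle beyond bookkeeping.
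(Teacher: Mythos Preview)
Your proposal is correct and follows essentially the same route as the paper. The paper is briefer: it cites Lemma~\ref{Proof of Main Theorem lemma 1} directly (now with $T=\infty$, $\epsilon=1$) to handle $M\times[1,\infty)$ rather than redoing the translation argument, and disposes of $M\times[0,1)$ by the tautological observation that $\varphi$ restricted to this compact set is a specific smooth function uniquely determined by the data, so its $C^{k}$ norms are automatically constants depending only on $(M,\omega,J)$, $F$, $\varphi_{0}$ and $k$.
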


\begin{proof}
For convenience, we use $C_{k}$ to denote the constant depending only on $(M,\omega,J)$, $F$, $\varphi_{0}$ and $k$. Combining Lemma \ref{Proof of Main Theorem lemma 1} and (1) in Theorem \ref{Main Theorem}, we obtain
\begin{equation}\label{Uniform Estimate equation 1}
\sup_{M\times[1,\infty)}|\nabla^{k}\varphi(x,t)|\leq C_{k}.
\end{equation}
Since $\varphi$ is uniquely determined by $(M,\omega,J)$, $F$ and $\varphi_{0}$, we have
\begin{equation}\label{Uniform Estimate equation 2}
\sup_{M\times[0,1)}|\nabla^{k}\varphi(x,t)|\leq C_{k}.
\end{equation}
Combining (\ref{Uniform Estimate equation 1}) and (\ref{Uniform Estimate equation 2}), we complete the proof.
\end{proof}

\begin{lemma}\label{Harnack Gradient calculation lemma}
Let $u$ be a positive solution of (\ref{Harnack parabolic equation}). For any $\epsilon\in(0,\frac{1}{2})$, $\alpha>1$ and $t>0$, there exists a constant $C$ depending only on $(M,\omega,J)$, $F$ and $\varphi_{0}$ such that
\begin{equation*}
\begin{split}
\left(L-\frac{\partial}{\partial t}\right)G \geq &~~ \frac{(1-\epsilon)t}{n}\left(|\partial f|_{\tilde{g}}^{2}-\frac{\partial f}{\partial t}\right)^{2}-2\text{Re}\left(g^{i\overline{j}}e_{i}(G)\overline{e}_{j}(f)\right)\\
& -\left(|\partial f|_{\tilde{g}}^{2}-\alpha\frac{\partial f}{\partial t}\right)-\frac{C\alpha t}{\epsilon}|\partial f|_{\tilde{g}}^{2}-\frac{C\alpha^{2}t}{\epsilon},
\end{split}
\end{equation*}
where $G=t(|\partial f|_{\tilde{g}}^{2}-\alpha\frac{\partial f}{\partial t})$, $f=\log u$ and $\tilde{g}$ is the corresponding Riemannian metric of $(M,\tilde{\omega},J)$.
\end{lemma}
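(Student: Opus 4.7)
The plan is to carry out a Bochner-type computation for $H := |\partial f|_{\tilde g}^{2} - \alpha \tfrac{\partial f}{\partial t}$ and then multiply by $t$. The starting point is the identity
\[
\Bigl(L - \tfrac{\partial}{\partial t}\Bigr) f = -|\partial f|_{\tilde g}^{2},
\]
obtained from $u = e^{f}$ and (\ref{Harnack parabolic equation}) via the elementary computation $Lu = u\bigl(Lf + |\partial f|_{\tilde g}^{2}\bigr)$, which uses only that $\partial f$ is of type $(1,0)$. This converts every occurrence of $Lf$ into $\tfrac{\partial f}{\partial t} - |\partial f|_{\tilde g}^{2}$, which is precisely what will produce the combination $|\partial f|_{\tilde g}^{2} - \tfrac{\partial f}{\partial t}$ on the right-hand side of the asserted inequality. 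Multiplying by $t$ at the very end and using $(L - \partial_{t})(tH) = t(L-\partial_{t})H - H$ is what manufactures the linear term $-(|\partial f|_{\tilde g}^{2} - \alpha\tfrac{\partial f}{\partial t})$.

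I will then compute $(L - \tfrac{\partial}{\partial t})|\partial f|_{\tilde g}^{2}$ in a local unitary frame $\{e_{i}\}$ diagonalizing $\tilde g$, essentially as in the first-order estimate of Section 4 but with the norm taken in $\tilde g$ rather than $g$. The nonnegative Hessian contribution is of the form $\tilde g^{i\overline{i}}\tilde g^{k\overline{k}}\bigl(|e_{i}e_{k}(f)|^{2} + |e_{i}\overline{e}_{k}(f)|^{2}\bigr)$; the cross term is $2\,\mathrm{Re}\bigl(\tilde g^{i\overline{i}} f_{\overline{i}}\, e_{i}(Lf)\bigr)$; and the remainder is built from torsion pieces arising from $[e_{i},\overline{e}_{j}]^{(0,1)}$ brackets and from derivatives of $\tilde g$, all uniformly controlled by Lemma \ref{Uniform Estimate}. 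For the $-(L-\partial_{t})(\alpha\tfrac{\partial f}{\partial t})$ piece I commute $\tfrac{\partial}{\partial t}$ past $L$, producing $-(\partial_{t}\tilde g^{i\overline{j}})(\partial\overline{\partial}f)(e_{i},\overline{e}_{j})$ as an error again bounded by Lemma \ref{Uniform Estimate}. Substituting $Lf = \tfrac{\partial f}{\partial t} - |\partial f|_{\tilde g}^{2}$ into the cross term and reassembling, the surviving first-order contribution becomes a multiple of $\mathrm{Re}(g^{i\overline{j}} e_{i}(G)\,\overline{e}_{j}(f))$; the discrepancy between $\tilde g^{i\overline{j}}$ and $g^{i\overline{j}}$ in this pairing is a $\sqrt{-1}\partial\overline{\partial}\varphi$-order correction, absorbed by Young's inequality with parameter $\epsilon$.

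The leading quadratic term $\tfrac{1-\epsilon}{n}\bigl(|\partial f|_{\tilde g}^{2} - \tfrac{\partial f}{\partial t}\bigr)^{2}$ is extracted via the pointwise Cauchy--Schwarz bound
\[
\sum_{i}(\tilde g^{i\overline{i}})^{2}\,|e_{i}\overline{e}_{i}(f)|^{2}\;\geq\;\tfrac{1}{n}\Bigl|\sum_{i}\tilde g^{i\overline{i}}\, e_{i}\overline{e}_{i}(f)\Bigr|^{2}\;=\;\tfrac{1}{n}\bigl(Lf + \mathrm{torsion}\bigr)^{2},
\]
applied inside the Hessian term from the previous step. Substituting $Lf = \tfrac{\partial f}{\partial t} - |\partial f|_{\tilde g}^{2}$ converts the right-hand side into the stated shape, and Young's inequality demotes the coefficient from $\tfrac{1}{n}$ to $\tfrac{1-\epsilon}{n}$ while dumping the torsion remainder into the error budget $\tfrac{C\alpha t}{\epsilon}|\partial f|_{\tilde g}^{2} + \tfrac{C\alpha^{2}t}{\epsilon}$.

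The main obstacle is the bookkeeping of torsion. Because $J$ is non-integrable and $\tilde\omega$ is not $d$-closed, the Bochner computation produces many correction terms built from $[e_{i},\overline{e}_{j}]^{(0,1)}$, from first and second derivatives of $\tilde g$, and from the moving-metric contribution $\partial_{t}\tilde g^{i\overline{j}}$. Each of these must be paired, by Cauchy--Schwarz with weight $\epsilon$, either against the positive Hessian term that feeds the $(1-\epsilon)/n$ leading coefficient, or against the explicit error $\tfrac{C\alpha t}{\epsilon}|\partial f|_{\tilde g}^{2} + \tfrac{C\alpha^{2}t}{\epsilon}$; the powers of $\alpha$ track the $-\alpha\tfrac{\partial f}{\partial t}$ weighting of $H$ through the commutator with $\partial_{t}$. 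What makes every torsion coefficient encountered $O(1)$ in the almost Hermitian setting is precisely Lemma \ref{Uniform Estimate}, which supplies uniform pointwise bounds on all derivatives of $\tilde g$, so that the same strategy as in the K\"ahler and Hermitian cases pushes through.
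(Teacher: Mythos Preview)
Your approach is essentially the same as the paper's: start from $(L-\partial_t)f=-|\partial f|_{\tilde g}^2$, run a Bochner computation for $(L-\partial_t)|\partial f|_{\tilde g}^2$ and $(L-\partial_t)f_t$, bound all torsion and metric-derivative errors by Lemma~\ref{Uniform Estimate}, and extract the $(Lf)^2/n$ piece via Cauchy--Schwarz on the Hessian term before substituting $Lf=f_t-|\partial f|_{\tilde g}^2$.

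One caveat. The cross term that falls out naturally from the computation is $-2\,\mathrm{Re}\bigl(\tilde g^{\,i\overline{j}}e_i(G)\overline{e}_j(f)\bigr)$, with $\tilde g$, and indeed this is what the paper's own proof produces (the $g^{i\overline{j}}$ in the displayed statement appears to be a misprint). Your proposed conversion from $\tilde g$ to $g$ is therefore unnecessary. It is also not clearly valid as written: the discrepancy $\bigl(\tilde g^{\,i\overline{j}}-g^{i\overline{j}}\bigr)e_i(G)\overline{e}_j(f)$ is of order $t\,|\nabla^2 f|\cdot|\partial f|^2$, and Young's inequality with weight $\epsilon$ leaves a residue of size $t|\partial f|^4/\epsilon$, which does not fit into the stated error budget $\tfrac{C\alpha t}{\epsilon}|\partial f|_{\tilde g}^2+\tfrac{C\alpha^2 t}{\epsilon}$. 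Since this cross term vanishes at the maximum of $G$ (the only place the lemma is applied, in Lemma~\ref{Harnack Gradient estimate lemma}), the distinction is immaterial downstream; but as a pointwise inequality you should simply keep $\tilde g^{\,i\overline{j}}$ and drop that paragraph.
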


\begin{proof}
Since $u$ is a positive solution of (\ref{Harnack parabolic equation}) and $f=\log u$, by direct calculation, we have
\begin{equation}\label{Harnack Gradient calculation lemma equation 1}
\left(L-\frac{\partial}{\partial t}\right)f=-|\partial f|_{\tilde{g}}^{2}.
\end{equation}
Let $\{e_{i}\}_{i=1}^{n}$ be a local frame for $T_{\mathbb{C}}^{(1,0)}M$. First, we compute
\begin{equation}\label{Harnack Gradient calculation lemma equation 2}
\begin{split}
L(|\partial f|_{\tilde{g}}^{2}) = &~~ \tilde{g}^{k\overline{l}}e_{k}\overline{e}_{l}\left(\tilde{g}^{i\overline{j}}e_{i}(f)\overline{e}_{j}(f)\right)-\tilde{g}^{k\overline{l}}[e_{k},\overline{e}_{l}]^{(0,1)}\left(\tilde{g}^{i\overline{j}}e_{i}(f)\overline{e}_{j}(f)\right)\\
= &~~ \tilde{g}^{k\overline{l}}e_{k}\overline{e}_{l}(\tilde{g}^{i\overline{j}})e_{i}(f)\overline{e}_{j}(f)+\tilde{g}^{k\overline{l}}
\tilde{g}^{i\overline{j}}e_{k}\overline{e}_{l}e_{i}(f)\overline{e}_{j}(f)\\
&+\tilde{g}^{k\overline{l}}\tilde{g}^{i\overline{j}}e_{i}(f)e_{k}\overline{e}_{l}\overline{e}_{j}(f)+2\text{Re}\left(\tilde{g}^{k\overline{l}}e_{k}(\tilde{g}^{i\overline{j}})\overline{e}_{l}e_{i}(f)\overline{e}_{j}(f)\right)\\
&+2\text{Re}\left(\tilde{g}^{k\overline{l}}\overline{e}_{l}(\tilde{g}^{i\overline{j}})e_{k}e_{i}(f)\overline{e}_{j}(f)\right)+\tilde{g}^{k\overline{l}}\tilde{g}^{i\overline{j}}e_{k}e_{i}(f)\overline{e}_{l}\overline{e}_{j}(f)\\
&+\tilde{g}^{k\overline{l}}\tilde{g}^{i\overline{j}}\overline{e}_{l}e_{i}(f)e_{k}\overline{e}_{j}(f)-\tilde{g}^{k\overline{l}}[e_{k},\overline{e}_{l}]^{(0,1)}(\tilde{g}^{i\overline{j}})e_{i}(f)\overline{e}_{j}(f)\\
&-\tilde{g}^{k\overline{l}}\tilde{g}^{i\overline{j}}[e_{k},\overline{e}_{l}]^{(0,1)}e_{i}(f)\overline{e}_{j}(f)-\tilde{g}^{k\overline{l}}\tilde{g}^{i\overline{j}}e_{i}(f)[e_{k},\overline{e}_{l}]^{(0,1)}\overline{e}_{j}(f).
\end{split}
\end{equation}
For the first and eighth terms of (\ref{Harnack Gradient calculation lemma equation 2}), by Lemma \ref{Uniform Estimate}, we have
\begin{equation*}
\tilde{g}^{k\overline{l}}e_{k}\overline{e}_{l}(\tilde{g}^{i\overline{j}})e_{i}(f)\overline{e}_{j}(f)
-\tilde{g}^{k\overline{l}}[e_{k},\overline{e}_{l}]^{(0,1)}(\tilde{g}^{i\overline{j}})e_{i}(f)\overline{e}_{j}(f)\geq-C|\partial f|_{\tilde{g}}^{2},
\end{equation*}
for a uniform constant $C$. For the fourth and fifth terms of (\ref{Harnack Gradient calculation lemma equation 2}), we get
\begin{equation*}
\begin{split}
~~&2\text{Re}\left(\tilde{g}^{k\overline{l}}e_{k}(\tilde{g}^{i\overline{j}})\overline{e}_{l}e_{i}(f)\overline{e}_{j}(f)\right)+2\text{Re}\left(\tilde{g}^{k\overline{l}}\overline{e}_{l}(\tilde{g}^{i\overline{j}})e_{k}e_{i}(f)\overline{e}_{j}(f)\right)\\
\leq~~&\frac{\epsilon}{30}\tilde{g}^{k\overline{l}}\tilde{g}^{i\overline{j}}\left(\overline{e}_{l}e_{i}(f)e_{k}\overline{e}_{j}(f)+e_{k}e_{i}(f)\overline{e}_{l}\overline{e}_{j}(f)\right)+\frac{C}{\epsilon}|\partial f|_{\tilde{g}}^{2}.
\end{split}
\end{equation*}
For the second and ninth terms of (\ref{Harnack Gradient calculation lemma equation 2}), we obtain
\begin{equation*}
\begin{split}
~~&\tilde{g}^{k\overline{l}}\tilde{g}^{i\overline{j}}e_{k}\overline{e}_{l}e_{i}(f)\overline{e}_{j}(f)-\tilde{g}^{k\overline{l}}\tilde{g}^{i\overline{j}}[e_{k},\overline{e}_{l}]^{(0,1)}e_{i}(f)\overline{e}_{j}(f)\\
\geq~~&\tilde{g}^{i\overline{j}}e_{i}(Lf)\overline{e}_{j}(f)+E\cdot|\partial f|_{\tilde{g}}-C|\partial f|_{\tilde{g}}^{2}\\
\geq~~&\tilde{g}^{i\overline{j}}e_{i}(Lf)\overline{e}_{j}(f)-\frac{\epsilon}{30}\tilde{g}^{k\overline{l}}\tilde{g}^{i\overline{j}}\left(\overline{e}_{l}e_{i}(f)e_{k}\overline{e}_{j}(f)+e_{k}e_{i}(f)\overline{e}_{l}\overline{e}_{j}(f)\right)
-\frac{C}{\epsilon}|\partial f|_{\tilde{g}}^{2},
\end{split}
\end{equation*}
where in the second line the term $E$ just contains second derivatives of $f$. Similarly, for the third and tenth terms of (\ref{Harnack Gradient calculation lemma equation 2}), we have
\begin{equation*}
\begin{split}
~~&\tilde{g}^{k\overline{l}}\tilde{g}^{i\overline{j}}e_{i}(f)e_{k}\overline{e}_{l}\overline{e}_{j}(f)-\tilde{g}^{k\overline{l}}\tilde{g}^{i\overline{j}}e_{i}(f)[e_{k},\overline{e}_{l}]^{(0,1)}\overline{e}_{j}(f)\\
\geq~~&\tilde{g}^{i\overline{j}}e_{i}(f)\overline{e}_{j}(Lf)-\frac{\epsilon}{30}\tilde{g}^{k\overline{l}}g^{i\overline{j}}\left(\overline{e}_{l}e_{i}(f)e_{k}\overline{e}_{j}(f)+e_{k}e_{i}(f)\overline{e}_{l}\overline{e}_{j}(f)\right)
-\frac{C}{\epsilon}|\partial f|_{\tilde{g}}^{2}.
\end{split}
\end{equation*}
Plugging these inequalities into (\ref{Harnack Gradient calculation lemma equation 2}), we obtain
\begin{equation*}
\begin{split}
L(|\partial f|_{\tilde{g}}^{2})\geq &~~ \left(1-\frac{\epsilon}{10}\right)\tilde{g}^{k\overline{l}}\tilde{g}^{i\overline{j}}\left(e_{k}e_{i}(f)\overline{e}_{l}\overline{e}_{j}(f)+\overline{e}_{l}e_{i}(f)e_{k}\overline{e}_{j}(f)\right)
-\frac{C}{\epsilon}|\partial f|_{\tilde{g}}^{2}\\
& +\tilde{g}^{i\overline{j}}e_{i}(Lf)\overline{e}_{j}(f)+\tilde{g}^{i\overline{j}}e_{i}(f)\overline{e}_{j}(Lf),
\end{split}
\end{equation*}
which implies
\begin{equation}\label{Harnack Gradient calculation lemma equation 3}
\begin{split}
\left(L-\frac{\partial}{\partial t}\right)|\partial f|_{\tilde{g}}^{2}\geq &~~ \left(1-\frac{\epsilon}{10}\right)\tilde{g}^{k\overline{l}}\tilde{g}^{i\overline{j}}\left(e_{k}e_{i}(f)\overline{e}_{l}\overline{e}_{j}(f)+\overline{e}_{l}e_{i}(f)e_{k}\overline{e}_{j}(f)\right)
-\frac{C}{\epsilon}|\partial f|_{\tilde{g}}^{2}\\
& -\tilde{g}^{i\overline{j}}e_{i}\left(|\partial f|_{\tilde{g}}^{2}\right)\overline{e}_{j}(f)
-\tilde{g}^{i\overline{j}}e_{i}(f)\overline{e}_{j}\left(|\partial f|_{\tilde{g}}^{2}\right),
\end{split}
\end{equation}
where we used (\ref{Harnack Gradient calculation lemma equation 1}) and Lemma \ref{Uniform Estimate} (note that Lemma \ref{Uniform Estimate} implies $-C\tilde{g}_{i\overline{j}}\leq\frac{\partial\tilde{g}_{i\overline{j}}}{\partial t}\leq C\tilde{g}_{i\overline{j}}$ for a uniform constant $C$). Next, by the Cauchy-Schwarz inequality, we have
\begin{equation}\label{Harnack Gradient calculation lemma equation 4}
\begin{split}
&~~ \left(L-\frac{\partial}{\partial t}\right)\frac{\partial f}{\partial t}\\
= &~~ \tilde{g}^{i\overline{j}}e_{i}\overline{e}_{j}\left(\frac{\partial f}{\partial t}\right)-\tilde{g}^{i\overline{j}}[e_{i},\overline{e}_{j}]^{(0,1)}\left(\frac{\partial f}{\partial t}\right)-\frac{\partial^{2} f}{\partial t^{2}}\\
= &~~ \frac{\partial}{\partial t}\left(L-\frac{\partial}{\partial t}\right)f-\frac{\partial\tilde{g}^{i\overline{j}}}{\partial t}e_{i}\overline{e}_{j}(f)+\frac{\partial\tilde{g}^{i\overline{j}}}{\partial t}[e_{i},\overline{e}_{j}]^{(0,1)}(f)\\
\leq & -\frac{\partial}{\partial t}|\partial f|_{\tilde{g}}^{2}+\frac{\epsilon}{10\alpha}\tilde{g}^{k\overline{l}}\tilde{g}^{i\overline{j}}\overline{e}_{l}e_{i}(f)e_{k}\overline{e}_{j}(f)+C|\partial f|_{\tilde{g}}^{2}+\frac{C\alpha}{\epsilon}\\
\leq & -2\text{Re}\left(\tilde{g}^{i\overline{j}}e_{i}(f_{t})\overline{e}_{j}(f)\right)+\frac{\epsilon}{10\alpha}\tilde{g}^{k\overline{l}}\tilde{g}^{i\overline{j}}\overline{e}_{l}e_{i}(f)e_{k}\overline{e}_{j}(f)
+C|\partial f|_{\tilde{g}}^{2}+\frac{C\alpha}{\epsilon},
\end{split}
\end{equation}
where we used (\ref{Harnack Gradient calculation lemma equation 1}) and the fact that $-C\tilde{g}_{i\overline{j}}\leq\frac{\partial\tilde{g}_{i\overline{j}}}{\partial t}\leq C\tilde{g}_{i\overline{j}}$ for a uniform constant $C$. By the arithmetic-geometric mean inequality and the Cauchy-Schwarz inequality, we obtain
\begin{equation}\label{Harnack Gradient calculation lemma equation 5}
\begin{split}
\tilde{g}^{k\overline{l}}\tilde{g}^{i\overline{j}}\overline{e}_{l}e_{i}(f)e_{k}\overline{e}_{j}(f) \geq &~~ \frac{1}{n}\left(\tilde{g}^{i\overline{j}}e_{i}\overline{e}_{j}(f)\right)^{2}\\
\geq & \left(1-\frac{\epsilon}{5}\right)\frac{(Lf)^{2}}{n}-\frac{C}{\epsilon}|\partial f|_{\tilde{g}}^{2}.
\end{split}
\end{equation}
Combining (\ref{Harnack Gradient calculation lemma equation 1}), (\ref{Harnack Gradient calculation lemma equation 3}), (\ref{Harnack Gradient calculation lemma equation 4}) and (\ref{Harnack Gradient calculation lemma equation 5}), we have
\begin{equation*}
\begin{split}
\left(L-\frac{\partial}{\partial t}\right)G
= &~~ t\left(L-\frac{\partial}{\partial t}\right)|\partial f|_{\tilde{g}}^{2}
-\alpha t\left(L-\frac{\partial}{\partial t}\right)\frac{\partial f}{\partial t}
-\left(|\partial f|_{\tilde{g}}^{2}-\alpha\frac{\partial f}{\partial t}\right)\\
\geq &~~ \left(1-\frac{\epsilon}{5}\right)t\tilde{g}^{k\overline{l}}\tilde{g}^{i\overline{j}}\overline{e}_{l}e_{i}(f)e_{k}\overline{e}_{j}(f)
-2\text{Re}\left(\tilde{g}^{i\overline{j}}e_{i}(G)\overline{e}_{j}(f)\right)\\
&~~ -\left(|\partial f|_{\tilde{g}}^{2}-\alpha\frac{\partial f}{\partial t}\right)-\frac{C\alpha t}{\epsilon}|\partial f|_{\tilde{g}}^{2}-\frac{C\alpha^{2}t}{\epsilon}\\
\geq &~~ \frac{(1-\epsilon)t}{n}\left(|\partial f|_{\tilde{g}}^{2}-\frac{\partial f}{\partial t}\right)^{2}-2\text{Re}\left(\tilde{g}^{i\overline{j}}e_{i}(G)\overline{e}_{j}(f)\right)\\
&~~ -\left(|\partial f|_{\tilde{g}}^{2}-\alpha\frac{\partial f}{\partial t}\right)-\frac{C\alpha t}{\epsilon}|\partial f|_{\tilde{g}}^{2}-\frac{C\alpha^{2}t}{\epsilon},
\end{split}
\end{equation*}
as required.
\end{proof}

By using Lemma \ref{Harnack Gradient calculation lemma} and the maximum principle, we have the following estimate.

\begin{lemma}\label{Harnack Gradient estimate lemma}
Let $u$ be a positive solution of (\ref{Harnack parabolic equation}). For any $\epsilon\in(0,\frac{1}{2})$, $\alpha>1$ and $t>0$, there exists a constant $C$ depending only on $(M,\omega,J)$, $F$ and $\varphi_{0}$ such that
\begin{equation*}
|\partial f|_{\tilde{g}}^{2}-\alpha \frac{\partial f}{\partial t}\leq \frac{C\alpha^{3}}{\epsilon(\alpha-1)}+\frac{n\alpha^{2}}{(1-\epsilon)t}.
\end{equation*}
\end{lemma}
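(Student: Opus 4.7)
The plan is a Li--Yau type maximum-principle argument applied to $G = t(|\partial f|_{\tilde{g}}^{2} - \alpha\,\partial_{t}f)$, together with the pointwise differential inequality of Lemma~\ref{Harnack Gradient calculation lemma}. Fix $T > 0$ arbitrary. Since $G(\cdot, 0) = 0$, either $G \leq 0$ on all of $M \times [0, T]$ (so the conclusion at time $T$ is trivial), or $G$ attains a positive maximum at some interior point $(x_{0}, t_{0})$ with $t_{0} \in (0, T]$. At such a point, $(L - \partial_{t})G \leq 0$ and $e_{i}(G) = \overline{e}_{i}(G) = 0$ for every $i$, so the gradient term $-2\,\text{Re}(\tilde{g}^{i\overline{j}} e_{i}(G)\,\overline{e}_{j}(f))$ in Lemma~\ref{Harnack Gradient calculation lemma} vanishes. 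Writing $y = |\partial f|_{\tilde{g}}^{2}$, $z = \partial_{t}f$ at $(x_{0}, t_{0})$, and $G_{0} = t_{0}(y - \alpha z) > 0$, Lemma~\ref{Harnack Gradient calculation lemma} reduces to the scalar inequality
\begin{equation*}
0 \geq \frac{(1-\epsilon)t_{0}}{n}(y - z)^{2} - (y - \alpha z) - \frac{C\alpha t_{0}}{\epsilon}\,y - \frac{C\alpha^{2} t_{0}}{\epsilon}.
\end{equation*}

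The essential algebraic step is the identity $y - z = \alpha^{-1}\bigl[(y - \alpha z) + (\alpha - 1)y\bigr]$. Squaring and using $y \geq 0$ together with $y - \alpha z > 0$ to discard the non-negative term $(\alpha-1)^{2}y^{2}/\alpha^{2}$ produces
\begin{equation*}
(y - z)^{2} \geq \frac{1}{\alpha^{2}}(y - \alpha z)^{2} + \frac{2(\alpha - 1)}{\alpha^{2}}\,y(y - \alpha z).
\end{equation*}
Inserting this into the previous inequality and multiplying through by $t_{0}$ so as to re-express everything in terms of $G_{0}$ yields
\begin{equation*}
0 \geq \frac{1 - \epsilon}{n\alpha^{2}}\,G_{0}^{2} + y\left[\frac{2(\alpha - 1)(1-\epsilon)t_{0}}{n\alpha^{2}}\,G_{0} - \frac{C\alpha t_{0}^{2}}{\epsilon}\right] - G_{0} - \frac{C\alpha^{2} t_{0}^{2}}{\epsilon}.
\end{equation*}

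I then split on the sign of the bracket. If $G_{0} < \frac{Cn\alpha^{3}t_{0}}{\epsilon(\alpha-1)(1-\epsilon)}$, then $G_{0}/T \leq (t_{0}/T) \cdot \frac{Cn\alpha^{3}}{\epsilon(\alpha-1)(1-\epsilon)} \leq \frac{C\alpha^{3}}{\epsilon(\alpha-1)}$, absorbing $1 - \epsilon \geq 1/2$ into $C$. Otherwise the bracket is non-negative and, since $y \geq 0$, the middle summand can be dropped, leaving the scalar quadratic $\frac{1-\epsilon}{n\alpha^{2}}G_{0}^{2} - G_{0} - \frac{C\alpha^{2}t_{0}^{2}}{\epsilon} \leq 0$, which solves as $G_{0} \leq \frac{n\alpha^{2}}{1 - \epsilon} + C\alpha^{2}t_{0}/\sqrt{\epsilon(1-\epsilon)}$. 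Dividing both cases by $T$, using $t_{0}/T \leq 1$, and comparing $\alpha^{2}/\sqrt{\epsilon(1-\epsilon)}$ with $\alpha^{3}/[\epsilon(\alpha-1)]$ (valid for $\epsilon \in (0, 1/2)$ and $\alpha > 1$, as then $\sqrt{\epsilon/(1-\epsilon)} < 1 < \alpha/(\alpha - 1)$), I arrive at $(y - \alpha z)(x, T) \leq G_{0}/T \leq \frac{n\alpha^{2}}{(1-\epsilon) T} + \frac{C\alpha^{3}}{\epsilon(\alpha-1)}$ for every $x \in M$. Since $T > 0$ was arbitrary, the lemma follows.

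The principal obstacle is the stray linear term $-\frac{C\alpha t}{\epsilon}|\partial f|_{\tilde{g}}^{2}$ in Lemma~\ref{Harnack Gradient calculation lemma}: it is proportional to $y$, whereas the quadratic $(y-z)^{2}$ is phrased in $y-z$ and the target quantity is $y - \alpha z$, three distinct linear combinations. The algebraic splitting above is the bridge that produces a sign-definite cross term $y(y - \alpha z)$ which absorbs $-\frac{C\alpha t}{\epsilon}\,y$ by a case-comparison rather than via Cauchy--Schwarz; handling this term by Cauchy--Schwarz would degrade the sharp leading constant $n/(1-\epsilon)$ attached to the $1/t$ term in the final estimate, which is precisely the constant needed later to integrate the differential Harnack inequality.
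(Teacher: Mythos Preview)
Your argument is correct and follows the same Li--Yau maximum-principle scheme as the paper: apply Lemma~\ref{Harnack Gradient calculation lemma} at a maximum of $G$, use the identity $y-z=\alpha^{-1}[(y-\alpha z)+(\alpha-1)y]$, and reduce to a scalar quadratic in $G_{0}$. The only difference is in the algebra used to absorb the stray term $-\tfrac{C\alpha t_{0}}{\epsilon}y$. The paper discards the cross term $\tfrac{2(\alpha-1)}{\alpha^{2}}t_{0}y\,G_{0}$ (non-negative since $G_{0}\geq 0$), retains the pure square $\bigl(\tfrac{\alpha-1}{\alpha}\bigr)^{2}t_{0}^{2}y^{2}$, and then applies $ax^{2}-bx\geq -b^{2}/(4a)$ to the combination $\tfrac{(1-\epsilon)}{n}\bigl(\tfrac{\alpha-1}{\alpha}\bigr)^{2}y^{2}-\tfrac{C\alpha}{\epsilon}y$; this produces an extra constant $-\tfrac{C\alpha^{4}t_{0}^{2}}{\epsilon^{2}(\alpha-1)^{2}}$ and a single quadratic in $G_{0}$. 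You instead discard the pure square, retain the cross term, and group it with the stray $y$-term into a bracket whose sign you split on. Both manipulations preserve the sharp leading coefficient $\tfrac{n\alpha^{2}}{(1-\epsilon)t}$ and give the same final bound; your case-split avoids one Young-type inequality at the cost of a dichotomy, while the paper's version yields a single clean quadratic.
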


\begin{proof}
For any $t'>0$, we assume
\begin{equation*}
\max_{M\times[0,t']}G(x,t)=G(x_{0},t_{0}),
\end{equation*}
where $(x_{0},t_{0})\in M\times[0,t']$. It then follows that $G(x_{0},t_{0})\geq G(x_{0},0)=0$. Without loss of generality, we further assume that $t_{0}>0$. Combining the maximum principle and Lemma \ref{Harnack Gradient calculation lemma}, at $(x_{0},t_{0})$, we have
\begin{equation}\label{Harnack Gradient estimate lemma equation 1}
\frac{(1-\epsilon)t_{0}^{2}}{n}\left(|\partial f|_{\tilde{g}}^{2}-\frac{\partial f}{\partial t}\right)^{2}-G-\frac{C\alpha t_{0}^{2}}{\epsilon}|\partial f|_{\tilde{g}}^{2}-\frac{C\alpha^{2}t_{0}^{2}}{\epsilon} \leq 0,
\end{equation}
for a uniform constant $C$. By direct calculation, at $(x_{0},t_{0})$, we obtain
\begin{equation}\label{Harnack Gradient estimate lemma equation 2}
\begin{split}
t_{0}^{2}\left(|\partial f|_{\tilde{g}}^{2}-\frac{\partial f}{\partial t}\right)^{2}
& = \frac{t_{0}^{2}}{\alpha^{2}}\left(|\partial f|_{\tilde{g}}^{2}-\alpha\frac{\partial f}{\partial t}+(\alpha-1)|\partial f|_{\tilde{g}}^{2}\right)^{2}\\
& = \frac{G^{2}}{\alpha^{2}}+\left(\frac{\alpha-1}{\alpha}\right)^{2}t_{0}^{2}|\partial f|_{\tilde{g}}^{4}
+\frac{2(\alpha-1)Gt_{0}}{\alpha^{2}}|\partial f|_{\tilde{g}}^{2}\\
& \geq \frac{G^{2}}{\alpha^{2}}+\left(\frac{\alpha-1}{\alpha}\right)^{2}t_{0}^{2}|\partial f|_{\tilde{g}}^{4},
\end{split}
\end{equation}
where we used $\alpha>1$ and $G(x_{0},t_{0})\geq 0$. Next, by using the inequality $ax^{2}-bx\geq-\frac{b^{2}}{4a}$ for any $a>0$, $b\geq 0$ and $x\in\mathbb{R}$, at $(x_{0},t_{0})$, we have
\begin{equation}\label{Harnack Gradient estimate lemma equation 3}
\frac{1-\epsilon}{n}\left(\frac{\alpha-1}{\alpha}\right)^{2}|\partial f|_{\tilde{g}}^{4}-\frac{C\alpha}{\epsilon}|\partial f|_{\tilde{g}}^{2}\geq-\frac{C\alpha^{4}}{\epsilon^{2}(\alpha-1)^{2}}.
\end{equation}
Combining (\ref{Harnack Gradient estimate lemma equation 1}), (\ref{Harnack Gradient estimate lemma equation 2}) and (\ref{Harnack Gradient estimate lemma equation 3}), at $(x_{0},t_{0})$, it is clear that
\begin{equation*}
\frac{1-\epsilon}{n\alpha^{2}}G^{2}-G-\frac{C\alpha^{4}t_{0}^{2}}{\epsilon^{2}(\alpha-1)^{2}} \leq 0,
\end{equation*}
which implies
\begin{equation*}
G^{2}-\frac{n\alpha^{2}}{1-\epsilon}G-\frac{Cn\alpha^{6}t_{0}^{2}}{\epsilon^{2}(1-\epsilon)(\alpha-1)^{2}} \leq 0.
\end{equation*}
This is a quadratic inequality of $G$. It then follows that
\begin{equation*}
\begin{split}
G(x_{0},t_{0}) & \leq
\frac{1}{2}\left(\frac{n\alpha^{2}}{1-\epsilon}+\sqrt{\left(\frac{n\alpha^{2}}{1-\epsilon}\right)^{2}+\frac{4Cn\alpha^{6}t_{0}^{2}}{\epsilon^{2}(1-\epsilon)(\alpha-1)^{2}}}\right)\\
& \leq \frac{n\alpha^{2}}{1-\epsilon}+\frac{C\alpha^{3}t_{0}}{\epsilon(\alpha-1)}.
\end{split}
\end{equation*}
By the definition of $(x_{0},t_{0})$, for any point $x\in M$, we have
\begin{equation*}
\begin{split}
G(x,t') & \leq G(x_{0},t_{0})\\
& \leq \frac{n\alpha^{2}}{1-\epsilon}+\frac{C\alpha^{3}t_{0}}{\epsilon(\alpha-1)}\\
& \leq \frac{n\alpha^{2}}{1-\epsilon}+\frac{C\alpha^{3}t'}{\epsilon(\alpha-1)}.
\end{split}
\end{equation*}
By the definition of $G(x,t')$, we have
\begin{equation*}
\left(|\partial f|_{\tilde{g}}^{2}-\alpha \frac{\partial f}{\partial t}\right)(x,t')\leq \frac{C\alpha^{3}}{\epsilon(\alpha-1)}+\frac{n\alpha^{2}}{(1-\epsilon)t'}.
\end{equation*}
Since $(x,t')$ is arbitrary, we complete the proof.
\end{proof}

By using Lemma \ref{Uniform Estimate} and Lemma \ref{Harnack Gradient estimate lemma}, we prove the following Harnack inequality.

\begin{proposition}\label{Harnack inequality}
Let $u$ be a positive solution of (\ref{Harnack parabolic equation}). For any $\epsilon\in(0,\frac{1}{2})$, $\alpha>1$ and $t_{2}>t_{1}>0$, there exists a constant $C$ depending only on $(M,\omega,J)$, $F$ and $\varphi_{0}$ such that
\begin{equation}\label{Harnack inequality equation 1}
\sup_{x\in M}u(x,t_{1})\leq\inf_{x\in M}u(x,t_{2})
\left(\frac{t_{2}}{t_{1}}\right)^{\frac{n\alpha}{1-\epsilon}}\exp\left(\frac{C\alpha}{t_{2}-t_{1}}+\frac{C(t_{2}-t_{1})\alpha^{2}}{\epsilon(\alpha-1)}\right).
\end{equation}
\end{proposition}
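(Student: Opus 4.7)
The plan is to run the standard Li--Yau argument: integrate the differential Harnack estimate of Lemma \ref{Harnack Gradient estimate lemma} along a space-time curve joining $(x_{1},t_{1})$ to $(x_{2},t_{2})$, and exponentiate. The only almost-Hermitian feature is that the gradient estimate is stated in terms of $\tilde{g}$ rather than $g$, so the path has to be chosen compatibly.

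Fix arbitrary points $x_{1},x_{2}\in M$ and parameters $t_{2}>t_{1}>0$. Let $\gamma\colon [t_{1},t_{2}]\to M$ be a minimizing $g$-geodesic from $x_{1}$ to $x_{2}$, parametrized with constant $g$-speed $|\dot{\gamma}|_{g}=d_{g}(x_{1},x_{2})/(t_{2}-t_{1})$. Since $M$ is compact, $d_{g}(x_{1},x_{2})\le D$ for a uniform diameter constant $D$; and by Proposition \ref{Second order estimate} the metrics $g$ and $\tilde{g}$ are uniformly equivalent on $M\times[0,\infty)$, so $|\dot{\gamma}|_{\tilde{g}}^{2}\le C|\dot{\gamma}|_{g}^{2}\le CD^{2}/(t_{2}-t_{1})^{2}$. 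Setting $f=\log u$, the chain rule gives
\begin{equation*}
\frac{d}{ds}f(\gamma(s),s)=df(\dot{\gamma})+\frac{\partial f}{\partial t}.
\end{equation*}
The term $df(\dot{\gamma})$ is bounded below by $-|\nabla_{\tilde{g}}f|_{\tilde{g}}\,|\dot{\gamma}|_{\tilde{g}}$, and for the real function $f$ one has $|\nabla_{\tilde{g}}f|_{\tilde{g}}^{2}$ comparable to $|\partial f|_{\tilde{g}}^{2}$ (with a universal numerical factor). Lemma \ref{Harnack Gradient estimate lemma} rearranged reads
\begin{equation*}
\frac{\partial f}{\partial t}\ge \frac{|\partial f|_{\tilde{g}}^{2}}{\alpha}-\frac{C\alpha^{2}}{\epsilon(\alpha-1)}-\frac{n\alpha}{(1-\epsilon)s}.
\end{equation*}
Combining the two and completing the square in the quantity $|\partial f|_{\tilde{g}}$ against $|\dot{\gamma}|_{\tilde{g}}$ yields
\begin{equation*}
\frac{d}{ds}f(\gamma(s),s)\ge -c\alpha|\dot{\gamma}|_{\tilde{g}}^{2}-\frac{C\alpha^{2}}{\epsilon(\alpha-1)}-\frac{n\alpha}{(1-\epsilon)s}
\end{equation*}
for a numerical constant $c$.

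Integrating this inequality over $s\in[t_{1},t_{2}]$ and using the bound on $\int_{t_{1}}^{t_{2}}|\dot{\gamma}|_{\tilde{g}}^{2}\,ds\le CD^{2}/(t_{2}-t_{1})$ gives
\begin{equation*}
f(x_{2},t_{2})-f(x_{1},t_{1})\ge -\frac{C\alpha}{t_{2}-t_{1}}-\frac{C(t_{2}-t_{1})\alpha^{2}}{\epsilon(\alpha-1)}-\frac{n\alpha}{1-\epsilon}\log\frac{t_{2}}{t_{1}},
\end{equation*}
after absorbing $cD^{2}$ into the uniform constant $C$. Exponentiating and taking the supremum in $x_{1}$ and the infimum in $x_{2}$ produces exactly (\ref{Harnack inequality equation 1}).

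The routine step is the chain-rule/Cauchy--Schwarz computation. The only genuine subtlety, and what I expect to be the main obstacle, is keeping $g$ and $\tilde{g}$ consistent throughout: the differential Harnack in Lemma \ref{Harnack Gradient estimate lemma} is phrased in $\tilde{g}$ while a natural minimizing path lives in the fixed metric $g$. This is resolved cleanly by Proposition \ref{Second order estimate}, which makes the two metrics uniformly equivalent and so allows us to pass lengths and gradient norms back and forth at the cost of a uniform constant absorbed into $C$.
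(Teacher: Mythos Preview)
Your proof is correct and follows essentially the same Li--Yau path-integration argument as the paper: integrate the differential Harnack estimate of Lemma \ref{Harnack Gradient estimate lemma} along a $g$-geodesic joining the two space-time points, use the uniform equivalence of $g$ and $\tilde{g}$ to pass between metrics, complete the square, and exponentiate. The only cosmetic differences are that the paper parametrizes the path over $[0,1]$ (with linear time interpolation) and cites Lemma \ref{Uniform Estimate} rather than Proposition \ref{Second order estimate} for the metric equivalence, but either reference suffices.
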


\begin{proof}
For any $x,y\in M$, let $\gamma:[0,1]\rightarrow M$ be the minimal geodesic from $y$ to $x$ (with respect to $g$). By Lemma \ref{Uniform Estimate} and Lemma \ref{Harnack Gradient estimate lemma}, we compute
\begin{equation}\label{Harnack inequality equation 2}
\begin{split}
& \log\frac{u(x,t_{1})}{u(y,t_{2})}\\
= & \int_{0}^{1} \frac{d}{ds}f\left(\gamma(s),(1-s)t_{2}+st_{1}\right)ds\\
\leq & \int_{0}^{1}\left(C|\partial f|_{g}-(t_{2}-t_{1})\frac{\partial f}{\partial t}\right)ds\\
\leq & \int_{0}^{1}\left(C|\partial f|_{\tilde{g}}-\frac{t_{2}-t_{1}}{\alpha}|\partial f|_{\tilde{g}}^{2}\right)
+\frac{t_{2}-t_{1}}{\alpha}\left(\frac{C\alpha^{3}}{\epsilon(\alpha-1)}+\frac{n\alpha^{2}}{(1-\epsilon)t}\right)ds\\
\leq &~ \frac{C\alpha}{t_{2}-t_{1}}+\frac{C(t_{2}-t_{1})\alpha^{2}}{\epsilon(\alpha-1)}+\frac{n\alpha}{1-\epsilon}\log\left(\frac{t_{2}}{t_{1}}\right),
\end{split}
\end{equation}
where $t=(1-s)t_{2}+st_{1}$ in the fourth line. By (\ref{Harnack inequality equation 2}), we obtain (\ref{Harnack inequality equation 1}).
\end{proof}

\section{Proof of (2) in Main Theorem}
In this section, we give the proof of (2) in Main Theorem.
\begin{proof}[Proof of (2) in Main Theorem]
We define $u=\frac{\partial\varphi}{\partial t}$. First, we claim that for any $t>0$, there exist constants $C$ and $\eta$ depending only on $(M,\omega,J)$, $F$ and $\varphi_{0}$ such that
\begin{equation}\label{Proof of (2) equation 1}
\theta(t)\leq Ce^{-\eta t},
\end{equation}
where $\theta(t)=\sup_{x\in M} u(x,t)-\inf_{x\in M} u(x,t)$. In order to prove (\ref{Proof of (2) equation 1}), we define
\begin{equation*}
v_{m}(x,t)=\sup_{y\in M}u(y,m-1)-u(x,m-1+t)
\end{equation*}
and
\begin{equation*}
w_{m}(x,t)=u(x,m-1+t)-\inf_{y\in M}u(y,m-1),
\end{equation*}
Without loss of generality, we can assume $u(x,m-1)$ is not constant. By the maximum principle, we obtain that $v_{m}$ and $w_{m}$ two positive solutions of (\ref{Harnack parabolic equation}). By Proposition \ref{Harnack inequality} (taking $\epsilon=\frac{1}{3}$, $\alpha=2$, $t_{1}=\frac{1}{2}$ and $t_{2}=1$), it is clear that
\begin{equation}\label{Proof of (2) equation 2}
\sup_{x\in M}u(x,m-1)-\inf_{x\in M}u(x,m-\frac{1}{2})\leq C\left(\sup_{x\in M}u(x,m-1)-\sup_{x\in M}u(x,m)\right)
\end{equation}
and
\begin{equation}\label{Proof of (2) equation 3}
\sup_{x\in M}u(x,m-\frac{1}{2})-\inf_{x\in M}u(x,m-1)\leq C\left(\inf_{x\in M}u(x,m)-\inf_{x\in M}u(x,m-1)\right).
\end{equation}
Combining (\ref{Proof of (2) equation 2}) and (\ref{Proof of (2) equation 3}), we obtain
\begin{equation*}
\theta(m-1)\leq\theta(m-1)+\theta(m-\frac{1}{2})\leq C\left(\theta(m-1)-\theta(m)\right),
\end{equation*}
which implies
\begin{equation*}
\theta(m)\leq\frac{C-1}{C}\theta(m-1).
\end{equation*}
By induction, we complete the proof of (\ref{Proof of (2) equation 1}).

Next, by the definition of $\tilde{\varphi}$, we get $\int_{M}\tilde{\varphi}~\omega^{n}=0$, which implies $\int_{M}\frac{\partial\tilde{\varphi}}{\partial t}\omega^{n}=0$. Hence, there exists $y\in M$ such that $\frac{\partial\tilde{\varphi}}{\partial t}(y,t)=0$. For any $x\in M$, we have
\begin{equation}\label{Proof of (2) equation 4}
\begin{split}
\left|\frac{\partial\tilde{\varphi}}{\partial t}(x,t)\right| & = \left|\frac{\partial\tilde{\varphi}}{\partial t}(x,t)-\frac{\partial\tilde{\varphi}}{\partial t}(y,t)\right|\\
& = |u(x,t)-u(y,t)|\\
& \leq \theta(t)\\
& \leq Ce^{-\eta t},
\end{split}
\end{equation}
where we used (\ref{Proof of (2) equation 1}) in the last line. By the definition of $\tilde{\varphi}$ and Lemma \ref{Uniform Estimate}, for any positive integer $k\geq 1$, it is clear that
\begin{equation}\label{Proof of (2) equation 6}
\sup_{M\times[0,\infty)}|\nabla^{k}\tilde{\varphi}(x,t)|=\sup_{M\times[0,\infty)}|\nabla^{k}\varphi(x,t)|\leq C_{k},
\end{equation}
where $C_{k}$ is the constant depending only on $(M,\omega,J)$, $F$, $\varphi_{0}$ and $k$. Combining (\ref{Proof of (2) equation 4}), (\ref{Proof of (2) equation 6}) and Arzela-Ascoli Theorem, there exists a smooth function $\tilde{\varphi}_{\infty}$ such that
\begin{equation}\label{Proof of (2) equation 5}
\tilde{\varphi}\overset{C^{\infty}}{\longrightarrow}\tilde{\varphi}_{\infty} \text{~~as $t\rightarrow\infty$}.
\end{equation}
By the definition of $\tilde{\varphi}$, (\ref{Parabolic Monge-Ampere Equation}) can be written as
\begin{equation*}
\frac{\partial\tilde{\varphi}}{\partial t}
=\log\frac{(\omega+\sqrt{-1}\partial\overline{\partial}\tilde{\varphi})^{n}}{\omega^{n}}-F
-\int_{M}\left(\log\frac{(\omega+\sqrt{-1}\partial\overline{\partial}\tilde{\varphi})^{n}}{\omega^{n}}-F\right)\omega^{n}.
\end{equation*}
Let $t\rightarrow\infty$, by (\ref{Proof of (2) equation 4}) and (\ref{Proof of (2) equation 5}), we obtain
\begin{equation*}
(\omega+\sqrt{-1}\partial\overline{\partial}\tilde{\varphi}_{\infty})^{n}=e^{F+b}\omega^{n},
\end{equation*}
where
\begin{equation*}
b=\int_{M}\left(\log\frac{(\omega+\sqrt{-1}\partial\overline{\partial}\tilde{\varphi}_{\infty})^{n}}{\omega^{n}}-F\right)\omega^{n}.
\end{equation*}
The uniqueness of $(\tilde{\varphi}_{\infty},b)$ follows from the maximum principle (see \cite[Section 6]{CTW}).
\end{proof}


\begin{thebibliography}{99}

\bibitem{Calabi} \textit{E. Calabi,} {On K\"{a}hler manifolds with vanishing canonical class,} Algebraic geometry and topology. A symposium in honor of S. Lefschetz, pp. 78--89. Princeton University Press, Princeton, N. J., 1957.

\bibitem{Cao} \textit{H.-D. Cao,} {Deformation of K\"{a}hler metrics to K\"{a}hler-Einstein metrics on compact K\"{a}hler manifolds,} Invent. Math. \textbf{81} (1985), no. 2, 359--372.

\bibitem{Cherrier} \textit{P. Cherrier,} {\'{E}quations de Monge-Amp\`{e}re sur les vari\'{e}t\'{e}s Hermitiennes compactes,} Bull. Sc. Math \textbf{111} (1987), 343--385.

\bibitem{Chu15} \textit{J. Chu,} {The complex Monge-Amp\`{e}re equation on some compact Hermitian manifolds,} Pacific J. Math. \textbf{276} (2015), no. 2, 369--386.

\bibitem{Chu16} \textit{J. Chu,} {$C^{2,\alpha}$ regularities and estimates for nonlinear elliptic and parabolic equations in geometry,} Calc. Var. Partial Differential Equations \textbf{55} (2016), no. 1, Art. 8, 20 pp.

\bibitem{CTW} \textit{J. Chu, V. Tosatti} and \textit{B. Weinkove,} {The Monge-Amp\`ere equation for non-integrable almost complex structures,} preprint, arXiv:1603.00706.

\bibitem{DZZ} \textit{S. Dinew, X. Zhang} and \textit{X. Zhang,} {The $C^{2,\alpha}$ estimate of complex Monge-Amp\`{e}re equation,} Indiana Univ. Math. J. \textbf{60} (2011), no. 5, 1713--1722.

\bibitem{Gill} \textit{M. Gill,} {Convergence of the parabolic complex Monge-Amp\`{e}re equation on compact Hermitian manifolds,} Comm. Anal. Geom. \textbf{19} (2011), no. 2, 277--303.

\bibitem{GL10} \textit{B. Guan} and \textit{Q. Li,} {Complex Monge-Amp\`{e}re equations and totally real submanifolds,} Adv. Math. \textbf{225} (2010), no. 3, 1185--1223.

\bibitem{GL12} \textit{B. Guan} and \textit{Q. Li,} {A Monge-Amp\`{e}re type fully nonlinear equation on Hermitian manifolds,} Discrete Contin. Dyn. Syst. Ser. B \textbf{17} (2012), no. 6, 1991--1999.

\bibitem{GS} \textit{B. Guan} and \textit{W. Sun,} {On a class of fully nonlinear elliptic equations on Hermitian manifolds,} Calc. Var. Partial Differential Equations \textbf{54} (2015), no. 1, 901--916.

\bibitem{Hanani} \textit{A. Hanani,} {\'{E}quations du type de Monge-Amp\`{e}re sur les vari\'{e}t\'{e}s hermitiennes compactes,} J. Funct. Anal. \textbf{137} (1996), no.1, 49--75.

\bibitem{HL} \textit{F. R. Harvey} and \textit{H. B. Lawson,} {Potential theory on almost complex manifolds,} Ann. Inst. Fourier (Grenoble) {\bf 65} (2015), no. 1, 171--210.

\bibitem{Li} \textit{Y. Li,} {A priori estimates for Donaldson's equation over compact Hermitian manifolds,} Calc. Var. Partial Differential Equations \textbf{50} (2014), no. 3-4, 867--882.

\bibitem{LY} \textit{P. Li} and \textit{S.-T. Yau,} {On the parabolic kernel of the Schr\"{o}dinger operator,} Acta Math. \textbf{156} (1986), no. 3-4, 153--201.

\bibitem{Lieberman} \textit{G. M. Lieberman,} {Second order parabolic differential equations,} World Scientific Publishing Co., Inc., River Edge, NJ, 1996. xii+439 pp. ISBN: 981-02-2883-X.

\bibitem{PSS} \textit{D. H. Phong, J. Song} and \textit{J. Sturm,} {Complex Monge-Amp\`{e}re equations,} Surveys in differential geometry. Vol. XVII, 327--410, Surv. Differ. Geom., 17, Int. Press, Boston, MA, 2012.

\bibitem{Plis} \textit{S. Pli\'{s},} {The Monge-Amp\`{e}re equation on almost complex manifolds,} Math. Z. \textbf{276} (2014), no. 3-4, 969--983.

\bibitem{Sun15} \textit{W. Sun,} {Parabolic complex Monge-Amp\`{e}re type equations on closed Hermitian manifolds,} Calc. Var. Partial Differential Equations \textbf{54} (2015), no. 4, 3715--3733.

\bibitem{Sun16} \textit{W. Sun,} {On a Class of Fully Nonlinear Elliptic Equations on Closed Hermitian Manifolds,} J. Geom. Anal. \textbf{26} (2016), no. 3, 2459--2473.

\bibitem{Szekelyhidi} \textit{G. Sz\'ekelyhidi,} {Fully non-linear elliptic equations on compact Hermitian manifolds,} preprint, arxiv:1501.02762.

\bibitem{STW} \textit{G. Sz\'ekelyhidi, V. Tosatti} and \textit{B. Weinkove,} {\em Gauduchon metrics with prescribed volume form}, preprint, arxiv:1503.04491.

\bibitem{Tian83} \textit{G. Tian,} {On the existence of solutions of a class of Monge-Amp\`{e}re equations,} A Chinese summary appears in Acta Math. Sinica \textbf{32} (1989), no. 4, 576. Acta Math. Sinica (N.S.) \textbf{4} (1988), no. 3, 250--265.

\bibitem{Tian14} \textit{G. Tian,} {A third derivative estimate for conic Monge-Ampere equations,} preprint.

\bibitem{Tosatti} \textit{V. Tosatti,} {A general Schwarz lemma for almost-Hermitian manifolds,} Comm. Anal. Geom. \textbf{15} (2007), no. 5, 1063--1086.

\bibitem{TWWY} \textit{V. Tosatti, Y. Wang, B. Weinkove} and \textit{X. Yang,} {$C^{2,\alpha}$ estimate for nonlinear elliptic equations in complex and almost complex geometry,} Calc. Var. Partial Differential Equations \textbf{54} (2015), no. 1, 431--453.

\bibitem{TW10a} \textit{V. Tosatti} and \textit{B. Weinkove,} {Estimates for the complex Monge-Amp\`{e}re equation on Hermitian and balanced manifolds,} Asian J. Math. \textbf{14} (2010), no. 1, 19--40.

\bibitem{TW10b} \textit{V. Tosatti} and \textit{B. Weinkove,} {The complex Monge-Amp\`{e}re equation on compact Hermitian manifolds,} J. Amer. Math. Soc. \textbf{23} (2010), no. 4, 1187--1195.

\bibitem{TW13a} \textit{V. Tosatti} and \textit{B. Weinkove,} {The Monge-Amp\`{e}re equation for $(n-1)$-plurisubharmonic functions on a compact K\"{a}hler manifold,} preprint, arXiv:1305.7511.

\bibitem{TW13b} \textit{V. Tosatti} and \textit{B. Weinkove,} {Hermitian metrics, $(n-1,n-1)$ forms and Monge-Amp\`{e}re equations,} preprint, arXiv:1310.6326.

\bibitem{TWY} \textit{V. Tosatti, B. Weinkove} and \textit{S.-T. Yau,} {Taming symplectic forms and the Calabi-Yau equation,} Proc. Lond. Math. Soc. (3) \textbf{97} (2008), no. 2, 401--424.

\bibitem{Wang} \textit{Y. Wang,} {On the $C^{2,\alpha}$-regularity of the complex Monge-Amp\`{e}re equation,} Math. Res. Lett. \textbf{19} (2012), no. 4, 939--946.

\bibitem{Weinkove04} \textit{B. Weinkove,} {The J-Flow, the Mabuchi energy, the Yang-Mills flow and Multiplier Ideal Sheaves,} PhD thesis, Columbia University, 2004.

\bibitem{Weinkove07} \textit{B. Weinkove,} {The Calabi-Yau equation on almost-K\"{a}hler four-manifolds,} J. Differential Geom. \textbf{76} (2007), no. 2, 317--349.

\bibitem{Yau} \textit{S.-T. Yau,} {On the Ricci curvature of a compact K\"ahler manifold and the complex Monge-Apm\`ere equation I,} Comm. Pure Appl. Math. \textbf{31} (1978), no. 3, 339--411.

\bibitem{Zhang} \textit{X. Zhang,} {A priori estimates for complex Monge-Amp\`{e}re equation on Hermitian manifolds,} Int. Math. Res. Not. IMRN 2010, no. 19, 3814--3836.

\end{thebibliography}
\end{document}